\titleformat*{\section}{\centering\bfseries\Large}
\titleformat*{\subsection}{\centering\bfseries\large}
\setlist[description]{leftmargin=\parindent,labelindent=\parindent}
\newcommand*{\rota}{\mathbb{A}}
\newcommand*{\haus}{\mathcal{H}}
\newcommand*{\sym}{\diamondsuit}
\newcommand*{\F}{\mathcal{F}}
\numberwithin{equation}{section}
\newtheorem{theorem}{Theorem}[section]
\newtheorem{corollary}[theorem]{Corollary}
\newtheorem{lemma}[theorem]{Lemma}
\theoremstyle{definition}
\newtheorem{definition}[theorem]{Definition}
\newtheorem{example}{Example}[section]
\title{Convergence properties of symmetrization processes}
\author{Jacopo Ulivelli \footnote{\textit{Affiliation}: La Sapienza, University of Rome, Italy, Piazzale Aldo Moro 5, 00185. \textit{Email}: jacopo.ulivelli@gmail.com}}
\date{}
\begin{document}
\maketitle
\begin{abstract}
	Steiner symmetrization is well known for its rounding and general convergence properties. We identify a whole family of symmetrizations sharing analogue behaviors: In fact we prove that all these symmetrizations share the same converging symmetrization processes, together with some pathological phenomena. \footnote{\textit{Mathematics Subject Classification}. Primary: 52A20,52A38.   Secondary: 52A30,51F15,52A39. \textit{Key words and phrases}. convex body, Steiner symmetrization, Schwarz symmetrization, Minkowski symmetrization, fiber symmetrization, Minkowski addition, universal sequences.}

\end{abstract}
	
\section{Introduction}

Symmetrizations play a very important role in geometry and its application, allowing to prove many results with relatively easy and direct proofs. In particular geometric and analytic inequalities, like the Isoperimetric, Blaschke-Santalò, Faber-Krahn inequalities and many others have been proved using for example Steiner symmetrization. For some self contained introductions on the subject see \cite{gardner_2006} Chapter 1 and 2, \cite{GRUB07} Chapter 9, \cite{schneider_2013} Chapter 10, \cite{HUWE} Chapter 3 and the references therein. All these results rely mainly on the fact that through Steiner symmetrization, for every convex compact set it is always possible to find a sequence of symmetrizations converging to a ball while preserving the volume.

We call \textit{symmetrization process} a sequence of symmetrizations applied to a subset of $\mathbb{R}^n$. These processes are the main focus of this work. 
In 1986 Mani-Levitska \cite{MALE1986} showed that, for Steiner symmetrization, a randomly chosen symmetrization process for a convex compact body converges almost surely to a ball. This result was later extended by Van Schaftingen \cite{MR2262256} to general compact sets, then by Volcic \cite{VO2013} for measurable sets. Coupier and Davydov \cite{coupier_davydov_2014} later proved, thanks to the inclusion between Steiner and Minkowski symmetrization, that an analogue probabilistic property holds for Minkowski symmetrization. Other interesting results concerning convergence in probability can be found in the works of Bianchi, Burchard, Gronchi and Volcic \cite{ConvShape} and Burchard and Fortier \cite{BURCHARD2013550}.

Parallelly, the study of deterministic convergence received a boost in 2012 with the work of Klain \cite{KLAIN2012340}, which inspired a series of papers on the subject. In particular Bianchi, Gardner and Gronchi in \cite{BIANCHI201751} and \cite{bianchi2019convergence} introduced a general framework for the study of symmetrizations, focusing, among the others, on the relations between different symmetrizations and their properties. We mainly use their formalism, which now we introduce. Let $\mathcal{E}$ be a family of sets (they might be convex compact sets, epigraphs of a certain class of functions...) and fix a subspace $H$, then a $H$-symmetrization is a map \[\diamondsuit_H: \mathcal{E} \to \mathcal{E}_H, \] where $\mathcal{E}_H$ is the subfamily of $\mathcal{E}$ of $H$-symmetric sets. Here we focus on the family of compact sets $\mathcal{C}^n$ and convex compact sets $\mathcal{K}^n$. Symmetrizations may enjoy many different properties, some of which can completely characterise them, as it is shown for example in \cite{BIANCHI201751}, Section 9. 

The properties we are interested in are monotonicity, invariance under reflections and invariance under orthogonal translations (see next Section for the specific definitions). We call $\F$ the family of all the symmetrizations satisfying such properties for every proper subspace of $\mathbb{R}^n$. The family $\F$ includes Minkowski and fiber symmetrizations (the latter corresponds for convex compact bodies to the Steiner symmetrization when considered with respect to hyperplanes) and possesses a very strong structure, as proved in Corollary 7.3 in \cite{BIANCHI201751}. Such structure plays an essential role in the proof of our main results. Indeed it will allow us to characterise some convergence phenomena for $\F$ which are shared by the whole family. A convergence property that has been studied in literature is the following.

\begin{definition}
	If $\diamondsuit$ is a symmetrization on $\mathcal{K}^n$, a sequence of subspaces $(H_m)$ of $\mathbb{R}^n$ is said to be \textit{weakly-universal} for $\diamondsuit$ if, for every $k \in \mathbb{N}$ we have that the sequence of sets \[K_{m,k}=\diamondsuit_{H_m}\dots \diamondsuit_{H_k} K \] converges for every $K \in \mathcal{K}^n$ with non empty interior to a ball of radius $r(K,k)$, thus we allow such quantity to change with respect to $k$. If $r(K,k)$ is independent from $k$, then $(H_m)$ is said to be \textit{universal} for $\sym$. 
\end{definition}

The definition of universal sequence was introduced in \cite{coupier_davydov_2014} (Theorem 3.1), were the following result was obtained for the family $\mathcal{K}^n_n$ of convex bodies, i.e. convex compact subsets of $\mathbb{R}^n$ with non empty interior.

\begin{theorem}[Coupier and Davydov]\label{T0}
	A sequence of hyperplanes $(H_m)$ in $\mathbb{R}^n$ is universal for Steiner symmetrization in $\mathcal{K}^n_n$ if and only if it is universal for Minkowski symmetrization in $\mathcal{K}^n_n$.
\end{theorem}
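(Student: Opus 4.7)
The plan is to exploit the pointwise inclusion
\[ S_H K \subseteq M_H K, \qquad K\in \mathcal{K}^n,\ H \text{ a hyperplane}, \]
between the Steiner and Minkowski symmetrals of a convex body, combined with the monotonicity of Minkowski symmetrization under set inclusion. Iterating both facts yields
\[ S_{H_m}\cdots S_{H_k}K \ \subseteq\ M_{H_m}\cdots M_{H_k}K, \qquad m\geq k. \]
Two classical invariants drive the argument: Steiner symmetrization preserves volume, so any ball the Steiner iterates converge to must have radius $r_S=(|K|/\omega_n)^{1/n}$; Minkowski symmetrization preserves mean width $b(K)$, so any ball the Minkowski iterates converge to must have radius $\rho=b(K)/2$. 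Urysohn's inequality gives $r_S\leq \rho$, with equality iff $K$ is a ball. The Urysohn gap $f(K):=b(K)-2(|K|/\omega_n)^{1/n}\geq 0$ is monotone non-increasing along both processes (for Steiner because volume is preserved and the above inclusion lowers mean width; for Minkowski because mean width is preserved and Brunn--Minkowski increases volume) and vanishes precisely on balls.

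For the implication \emph{Steiner universal $\Rightarrow$ Minkowski universal}, I would fix $K\in\mathcal{K}^n_n$, $k\geq 1$, and set $L_m:=M_{H_m}\cdots M_{H_k}K$. Constant mean width forces $(L_m)$ inside a fixed ball, so Blaschke's selection theorem yields subsequential Hausdorff limits $L^\infty$ with $b(L^\infty)=b(K)$ and, by passing to the limit in the iterated inclusion, $r_S B\subseteq L^\infty$. To upgrade this to $L^\infty=\rho B$, the idea is to run a fresh Steiner process on $L^\infty$ itself: by universality of $(H_m)$ for Steiner, the iterates $S_{H_{m'}}\cdots S_{H_{k'}}L^\infty$ converge to a ball of radius $(|L^\infty|/\omega_n)^{1/n}$ for every starting index $k'$. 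A diagonal extraction aligning this Steiner process with the Minkowski convergence along the chosen subsequence, together with monotonicity of the Urysohn gap, should force $f(L^\infty)=0$, hence $L^\infty=\rho B$. Uniqueness of the subsequential limit then promotes this to Hausdorff convergence of the full sequence, and since $\rho=b(K)/2$ does not depend on $k$ universality holds.

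The converse direction \emph{Minkowski universal $\Rightarrow$ Steiner universal} follows the mirror strategy. The Steiner iterates $N_m:=S_{H_m}\cdots S_{H_k}K$ are contained in the Minkowski iterates, which converge to $\rho B$, so $N_m\subseteq (\rho+\varepsilon)B$ eventually; combined with volume preservation, Blaschke yields subsequential limits $N^\infty\subseteq \rho B$ with $|N^\infty|=|K|$. Applying the universality hypothesis for Minkowski to $N^\infty$ and invoking the equality case of Urysohn pins down $N^\infty$ as the ball of radius $r_S$, which is again independent of $k$ by volume preservation.

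The step I expect to be the main obstacle is, in both directions, the rigidity argument that promotes "subsequential limit contains a ball of the correct volume-radius and has the correct mean width" to "subsequential limit is the target ball". The inclusion and the invariant preserved by each process yield only one half of a Urysohn-type relation, which on its own does not imply roundness. The decisive input must be the universality hypothesis applied to the subsequential limit itself, and this diagonal/bootstrap argument, together with the monotonicity of the Urysohn gap, is where the technical heart of the theorem lies.
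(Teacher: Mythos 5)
Your proposal assembles the right raw materials---the pointwise inclusion $S_H K \subseteq M_H K$, monotonicity, the preserved invariants (volume for Steiner, mean width for Minkowski), and compactness via Blaschke selection---and these are indeed the ingredients of the argument in the paper (which does not reprove Theorem \ref{T0} itself but proves the generalization in Theorem \ref{cpbis} from Lemma \ref{L1} and Theorem \ref{T2}). However, the step you yourself flag as the main obstacle is a genuine gap, and the route you sketch for closing it does not work. Applying the universality hypothesis to the subsequential limit $L^\infty$ launches a \emph{fresh} Steiner process on $L^\infty$, and there is no mechanism relating the symmetrals $S_{H_{m'}}\cdots S_{H_{k'}}L^\infty$ back to the original Minkowski iterates $M_{H_m}\cdots M_{H_k}K$: the two processes share no inclusion, no common invariant, and no index alignment, so the proposed ``diagonal extraction'' has nothing to diagonalize over. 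Likewise, in the converse direction, knowing that the Minkowski iterates of $N^\infty$ converge to a ball of radius $b(N^\infty)/2$ tells you nothing about the shape of $N^\infty$; the equality case of Urysohn requires $b(N^\infty)=2(|N^\infty|/\kappa_n)^{1/n}$, which is precisely what you have not established. Monotonicity of the Urysohn gap alone only gives that the gap is non-increasing, not that it vanishes in the limit.

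The missing idea is to apply the universality hypothesis not to the limit body but to the \emph{hybrid} processes obtained by switching symmetrization at a late time $\nu$. Concretely, for the direction ``Steiner universal $\Rightarrow$ Minkowski universal'', set $K_\nu=M_{H_\nu}\cdots M_{H_1}K$ and consider $S_{H_m}\cdots S_{H_{\nu+1}}K_\nu$. By hypothesis this converges to a ball $B_\nu$; iterating $S_{H}C\subseteq M_{H}C$ together with monotonicity gives $S_{H_m}\cdots S_{H_{\nu+1}}K_\nu\subseteq M_{H_m}\cdots M_{H_1}K$, hence $B_\nu\subseteq L$ for every subsequential limit $L$ of the Minkowski process, while volume preservation under Steiner gives $\lambda_n(B_\nu)=\lambda_n(K_\nu)\nearrow \lambda_n(L)$. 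A convex body containing balls whose volumes exhaust its own must be a ball, and its radius is determined by $b(K)$, independently of $k$; this simultaneously pins down all subsequential limits and yields convergence. The converse direction is the mirror image: the hybrid $M_{H_m}\cdots M_{H_{\nu+1}}S_{H_\nu}\cdots S_{H_1}K$ converges to a ball $B_\nu\supseteq W$ with $\mathcal{W}(B_\nu)=\mathcal{W}(S_{H_\nu}\cdots S_{H_1}K)\searrow \mathcal{W}(W)$, forcing $W$ to be a ball of the volume-determined radius. This squeezing of the limit between balls produced by the \emph{other} process, rather than a bootstrap on the limit itself, is the technical heart your proposal is missing.
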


Later this Theorem was extended by Bianchi, Gardner and Gronchi together with the introduction of weakly universal sequences. In \cite{BIANCHI201751}, section 8 and \cite{bianchi2019convergence} section 6 and 7 many results were obtained in this direction, in particular we observe that Theorems 7.3 and 7.4 from \cite{bianchi2019convergence} together with Theorem \ref{T0} generalize the latter result to the family $\mathcal{C}^n$.

In all these results the limit of the sequence is a ball. Here instead we study a wider class of sequences, without prescribing specific limit sets.
\begin{definition}\label{D1}
	If $\diamondsuit$ is a symmetrization on $\mathcal{E}$, a sequence of subspaces $(H_m)$ is said to be $\diamondsuit$-\textit{stable} (or \textit{stable for the symmetrization} $\sym$) if, for every $k \in\mathbb{N}$ the sequence defined for $m\geq k$ \[K_m=\diamondsuit_{H_m}\dots\diamondsuit_{H_{k+1}}\diamondsuit_{H_k}K\] converges for every $K \in \mathcal{E}$.
\end{definition}
Notice that the limit might depend on  $K \in \mathcal{K}^n$ and $k \in \mathbb{N}$ both, as we show in the Examples \ref{exa2} and \ref{ex2}.

The  generalization of Theorem \ref{T0} we prove is the following.
\begin{theorem}\label{T1}
	Let $\diamondsuit \in \F$ be a symmetrization on $\mathcal{K}^n$. Then, if $(H_m)$ is a $\diamondsuit$-stable sequence of subspaces of $\mathbb{R}^n$, it is $\spadesuit$-stable for every symmetrization $\spadesuit \in \F$. 
	
In particular this holds for Steiner and Minkowski symmetrizations when $(H_m)$ are hyperplanes.
\end{theorem}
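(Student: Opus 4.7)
The strategy exploits the strong structural description of the family $\F$ provided by Corollary~7.3 of \cite{BIANCHI201751}, which places every $\spadesuit \in \F$ in a canonical position relative to the two extremal members of the family: the fiber symmetrization $F_H$ (coinciding with Steiner symmetrization when $H$ is a hyperplane) and the Minkowski symmetrization $M_H$. Together with the monotonicity shared by every element of $\F$, the structural bracketing $F_H K \subseteq \spadesuit_H K \subseteq M_H K$ propagates by an easy induction to arbitrary compositions:
\[
F_{H_m}\circ\cdots\circ F_{H_k}(K) \;\subseteq\; \spadesuit_{H_m}\circ\cdots\circ\spadesuit_{H_k}(K) \;\subseteq\; M_{H_m}\circ\cdots\circ M_{H_k}(K),
\]
for every $K \in \mathcal{K}^n$ and every $m \geq k$. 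The plan is to reduce $\spadesuit$-stability to simultaneous $F$- and $M$-stability, both of which I would deduce from the assumed $\diamondsuit$-stability via this sandwich.

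Given a $\diamondsuit$-stable sequence $(H_m)$, I would first show that it must also be $F$-stable and $M$-stable. For the Minkowski case the analysis is essentially linear: $M_H$ acts as an averaging on support functions, so the iterated support function is a successive average of $h_K$ along hyperplane reflections. The inclusion $\diamondsuit_{H_m}\circ\cdots\circ\diamondsuit_{H_k}K \subseteq M_{H_m}\circ\cdots\circ M_{H_k}K$ combined with the assumed convergence of the $\diamondsuit$-iterates supplies uniform support-function bounds and enough monotonic control to promote subsequential convergence to full Hausdorff convergence of the $M$-iterates. For the fiber case, volume preservation confines $F_{H_m}\circ\cdots\circ F_{H_k}K$ to a fixed ball, Blaschke selection produces cluster points, and the inner inclusion $F_{H_m}\circ\cdots\circ F_{H_k}K \subseteq \diamondsuit_{H_m}\circ\cdots\circ\diamondsuit_{H_k}K$ together with the known $\diamondsuit$-limit pins these cluster points down to a single limit $L_F$.

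Once $F$-stability and $M$-stability are established, with respective limits $L_F \subseteq L_M$, the iterated sandwich squeezes the $\spadesuit$-iterates from both sides. Blaschke selection produces cluster points $L$ of $\spadesuit_{H_m}\circ\cdots\circ\spadesuit_{H_k}K$ satisfying $L_F \subseteq L \subseteq L_M$, and the task reduces to showing that such an $L$ is unique. Here I would combine the structural identity of Corollary~7.3 with the fact that any cluster point must be asymptotically compatible with the tail reflections $R_{H_m}$, forcing $L$ to be determined by $L_F$ and $L_M$ in the same way that $\spadesuit_H K$ is determined by $F_H K$ and $M_H K$ at the single-step level. I expect this uniqueness step, rather than the propagation of the sandwich itself, to be the main technical obstacle: the sandwich by itself only traps cluster points inside a possibly nontrivial range, and turning this into a single Hausdorff limit is precisely where the full strength of the structural description of $\F$ must be used.
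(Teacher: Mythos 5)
There is a genuine gap, and you have in fact located it yourself: the step that turns the two-sided trapping of cluster points into uniqueness of the limit. Your proposed resolution --- that a cluster point $L$ with $L_F \subseteq L \subseteq L_M$ is ``determined by $L_F$ and $L_M$ in the same way that $\spadesuit_H K$ is determined by $F_H K$ and $M_H K$'' --- is not available, because Corollary~7.3 of \cite{BIANCHI201751} (Theorem~\ref{T2} here) only \emph{brackets} $\spadesuit_H K$ between the two extremes; it does not determine it, and $\F$ contains symmetrizations lying strictly between fiber and Minkowski. The same defect appears earlier in your argument: containment of the $M$-iterates above the convergent $\diamondsuit$-iterates, or of the $F$-iterates below them, constrains cluster points but provides no mechanism for collapsing them to a single limit, and ``uniform support-function bounds and enough monotonic control'' is not yet an argument.

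What is missing is the pair of monotone functionals together with a restart trick, which is how the paper closes exactly this gap (the paper reduces Theorem~\ref{T1} to Theorem~\ref{T4} by taking all rotations equal to the identity; the substance is in Theorem~\ref{T4}). By Lemma~\ref{L1}, every $\sym \in \F$ increases volume and decreases mean width. To pass from $\diamondsuit$-stability to $M$-stability: $\lambda_n(K_m)$ increases to some $c>0$ (after fattening $K$ by $B(0,r)$ to handle lower-dimensional bodies); given two cluster points $L_1 \neq L_2$ of the $M$-process, pick $\nu$ with $c - \lambda_n(K_\nu) < \varepsilon$ and run the $\diamondsuit$-process from index $\nu+1$ starting at $K_\nu$; by hypothesis it converges to some $J$ with $J \subseteq L_1 \cap L_2$ and $\lambda_n(J) \geq \lambda_n(K_\nu) > c - \varepsilon$, so $\lambda_n(L_1 \Delta L_2) < 2\varepsilon$, a contradiction. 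To pass from $M$-stability to $\spadesuit$-stability one runs the dual argument with the mean width, which is nonincreasing along the $\spadesuit$-process: the restarted $M$-process converges to a $V$ containing $\mathrm{conv}(W_1 \cup W_2)$ with $\mathcal{W}(V) = \mathcal{W}(Z_\nu)$ arbitrarily close to the limit value $b$, while $\mathcal{W}(\mathrm{conv}(W_1\cup W_2)) > b$ whenever $W_1 \neq W_2$ have equal mean width $b$. Note also two smaller slips: fiber symmetrization preserves volume only for hyperplanes (for $i<n-1$ it strictly increases it in general, so boundedness of the $F$-iterates should come from the outer Minkowski inclusion), and your route never needs $F$-stability at all --- the transfer goes $\diamondsuit \to M \to \spadesuit$, avoiding the squeeze between $L_F$ and $L_M$ entirely.
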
 

This turns out to be a simple consequence of a broader result we prove in Theorem \ref{T4}, which involves the so called \textit{convergence in shape}, studied for the first time in \cite{ConvShape}. This kind of convergence, properly defined in the next section, involves a sequence of rotations $(\rota_m)$, which corrects at every step the underlying symmetrization process. Indeed such tool can be used to give convergence to symmetrizations processes which would not naturally have a limit, as the ones we study in Section 4. Using the same notation of Theorem \ref{T1}, we prove in Theorem \ref{T4} that, given two symmetrizations $\sym, \spadesuit \in \F$, we have that $\sym$ is stable in shape if and only if $\spadesuit$ is stable in shape. 

This last result has many interesting consequences. One of them is a partial answer to the question: Does a converging sequence of hyperplanes induce a converging symmetrization process? We find a positive answer when one additional assumption is imposed. 

\begin{theorem}\label{T6}
	Let $(H_m)$ be a sequence of hyperplanes and consider the corresponding normals $(u_m) \subset \mathbb{S}^{n-1}$. Consider moreover $\sym \in \mathcal{F}$ . If the angles $(\alpha_m) \subset [0,\pi/2]$ given by the relation $\abs{u_m \cdot u_{m-1}}=\cos \alpha_m$ are such that \[\sum_{m\in \mathbb{N}} \abs{\alpha_m}<+\infty,\] then the sequence $(H_m)$ converges and it is $\diamondsuit$-stable on $\mathcal{K}^n$.
\end{theorem}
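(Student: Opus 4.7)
The sequence of hyperplanes converges by elementary means: the sign ambiguity in $|u_m\cdot u_{m-1}|=\cos\alpha_m$ can be resolved by choosing representatives $\tilde u_m\in\mathbb{S}^{n-1}$ inductively so that $\tilde u_m\cdot\tilde u_{m-1}\geq 0$, giving $|\tilde u_m-\tilde u_{m-1}|=2\sin(\alpha_m/2)\leq \alpha_m$; summability makes this sequence Cauchy on $\mathbb{S}^{n-1}$, so that $H_m$ converges to some hyperplane $H=u^\perp$. For the stability assertion, Theorem~\ref{T1} reduces the problem to checking $\diamondsuit$-stability for a single $\diamondsuit\in\mathcal{F}$, and the natural choice is Minkowski symmetrization $M_H$, whose action on support functions is simply
\[
h_{M_HK}(u)=\tfrac{1}{2}\bigl(h_K(u)+h_K(R_H u)\bigr),
\]
where $R_H$ denotes orthogonal reflection through $H$.

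Fix $K\in\mathcal{K}^n$ and $k\in\mathbb{N}$, and set $L_m:=M_{H_m}\cdots M_{H_k}K$. Because Minkowski symmetrization with respect to a linear subspace preserves inclusion in any origin-centred ball, all $L_m$ lie in a common $B_R$, so the support functions $h_{L_m}$ are uniformly $R$-Lipschitz on $\mathbb{S}^{n-1}$. The crucial observation is that $L_m$ is $H_m$-symmetric, hence $h_{L_m}\circ R_{H_m}=h_{L_m}$; using this identity,
\[
h_{L_{m+1}}(u)-h_{L_m}(u)=\tfrac{1}{2}\bigl(h_{L_m}(R_{H_{m+1}} u)-h_{L_m}(u)\bigr)=\tfrac{1}{2}\bigl(h_{L_m}(R_{H_m}R_{H_{m+1}} u)-h_{L_m}(u)\bigr).
\]
Now $R_{H_m}R_{H_{m+1}}$ is a rotation by angle $2\alpha_{m+1}$ in the two-plane spanned by $u_m$ and $u_{m+1}$, so $|R_{H_m}R_{H_{m+1}} u-u|\leq 2\sin\alpha_{m+1}\leq 2\alpha_{m+1}$, and the Lipschitz bound yields $d_H(L_{m+1},L_m)\leq R\alpha_{m+1}$.

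Summing and invoking the summability hypothesis shows that $(L_m)$ is Cauchy in the Hausdorff metric, hence convergent in $\mathcal{K}^n$. This establishes Minkowski-stability of $(H_m)$, and Theorem~\ref{T1} then upgrades it to $\diamondsuit$-stability for every $\diamondsuit\in\mathcal{F}$. The only genuinely non-trivial point in the strategy is the trick of replacing the possibly large reflection $R_{H_{m+1}}$ by the small rotation $R_{H_m}R_{H_{m+1}}$ via the $H_m$-symmetry of $L_m$, which converts the angle $\alpha_{m+1}$ between consecutive hyperplanes into a proportional $L^\infty$-perturbation of the support function; everything else is uniform boundedness and the linearity of support-function averaging, so I do not anticipate any serious obstacle.
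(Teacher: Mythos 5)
Your proof is correct, but it takes a genuinely different route from the paper. The paper deduces Theorem \ref{T6} from Corollary \ref{C1} (hence ultimately from Theorem \ref{T3} of Bianchi--Burchard--Gronchi--Vol\v{c}i\v{c}): it takes the correcting rotations $R_m=\rota_m\cdots\rota_1$ furnished by that result, shows that under $\sum|\alpha_m|<\infty$ they form a Cauchy sequence in operator norm and hence converge to a rotation $R$, and then removes the rotations from the shape-convergent sequence to get honest convergence to $R^{-1}L$. You instead bypass the shape-convergence machinery entirely: you prove Minkowski-stability directly via the telescoping estimate $d_\haus(L_{m+1},L_m)\le R\,\alpha_{m+1}$, obtained from the identity $h_{M_{H_{m+1}}L_m}(u)-h_{L_m}(u)=\tfrac12\bigl(h_{L_m}(R_{H_m}R_{H_{m+1}}u)-h_{L_m}(u)\bigr)$, the fact that $R_{H_m}R_{H_{m+1}}$ is a rotation by $2\alpha_{m+1}$ (so it displaces unit vectors by at most $2\sin\alpha_{m+1}$, independently of the sign ambiguity in the normals), and the uniform $R$-Lipschitz bound on the support functions; Theorem \ref{T1} then transfers stability to every $\sym\in\F$. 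Both arguments are sound, and the key estimates check out (uniform containment in $B(0,R)$ is preserved by Minkowski symmetrization through linear subspaces, and the bound is uniform in the starting index $k$, as stability requires). What your approach buys is self-containedness and elementarity --- it needs only Theorem \ref{T1} and standard support-function facts, not the $\sum\alpha_m^2<\infty$ shape-convergence theorem --- and it yields an explicit quantitative rate $d_\haus(L_m,L)\le R\sum_{j>m}\alpha_j$; what the paper's approach buys is brevity once Corollary \ref{C1} is in place, and it treats all $\sym\in\F$ simultaneously rather than through the Minkowski case.
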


We were not able to prove that such assumption is necessary, but the question is the subject of ongoing research.\\
The structure of this work is the following. In Section 2 we introduce the basic notation and definitions, recalling some tools and instrumental results. In Section 3 we prove Theorem \ref{T4} and investigate some consequences. Finally in Section 4, after presenting a counterexample from \cite{ConvShape}, we prove new ones and show that the same procedures work for all the symmetrizations of the family $\F$.

\section{Preliminaries}

Our ambient space is the family of compact subsets of $\mathbb{R}^n$, where we consider the following operation: Given two subsets $A,B$ of $\mathbb{R}^n$, the Minkowski addition of $A$ and $B$ is the set \[A+B:=\{x+y|x \in A, y \in B\}. \] Such space is a complete metric space with respect to the topology induced by the Hausdorff distance, which for two compact sets $K,L$ is given by \[d_\haus(K,L):=\max\{\inf\{\varepsilon>0, K\subset L+\varepsilon B^n\}, \inf\{\varepsilon>0, L\subset K+\varepsilon B^n\} \}, \] where $B^n$ is the Euclidean $n$-dimensional unitary ball centered in the origin.  See \cite{schneider_2013}, \cite{HUWE}, \cite{HADW57} for the classical theory of convex bodies.

We denote as $\mathcal{C}^n$ and $\mathcal{K}^n$ the families of compact sets and convex compact sets of $\mathbb{R}^n$ respectively. When using the subscripts $\mathcal{C}^n_n, \mathcal{K}^n_n$, we are restricting ourselves to the respective families of bodies, i.e. sets with non-empty interior. Throughout the paper $B(x,r)$ denotes the Euclidean ball centred in $x$ of radius $r>0$, and we write the Euclidean scalar product between two vectors $x,y \in \mathbb{R}^n$ as $x\cdot y$. $\mathbb{S}^{n-1}$ is the unit sphere in $\mathbb{R}^n$.

When dealing with symmetrizations a crucial role is played by some specific set functions on the family of sets we are considering. In particular, for $\mathcal{C}^n$ we are interested in the volume, given by the $n$-dimensional Lebesgue measure of a set which we write as $\lambda_n(\cdot)$. For convex sets we can also consider the mean width, which for $K\in \mathcal{K}^n$ is  \[ \mathcal{W}(K):= \frac{1}{\omega_n}\int_{\mathbb{S}^{n-1}} [h_K(\nu)+h_K(-\nu)] d\mathcal{H}^{n-1}(\nu), \] where $\omega_n$ is the $(n-1)$-dimensional measure of $\mathbb{S}^{n-1}$ and $h_K$ is the support function of $K$, given by \[h_K(x):=\sup\{x\cdot y|y \in K\}, \quad x \in \mathbb{S}^{n-1}.\]

An important property of the Minkowski addition of compact convex sets is that it is equivalent to the sum of support functions in the following sense: 
\begin{equation}\label{e2}
h_K(\cdot)+h_L(\cdot)=h_{K+L}(\cdot)
\end{equation}
for every $K,L \in \mathcal{K}^n$. In this notation, the translation of a set $A$ by a vector $x\in \mathbb{R}^n$ may be written as $A+x$. \\
The symmetric difference of two measurable subsets $A,B$ of $\mathbb{R}^n$ is \[\lambda_n(A\Delta B):=\lambda_n(A\setminus B)+\lambda_n(B \setminus A).\] In $\mathcal{K}^n_n$ this is the Nikodym distance, which is equivalent to the Hausdorff distance. 

We denote as $\mathcal{G}(n,i), 1 \leq i \leq n-1$ the sets of subspaces of $\mathbb{R}^n$ of dimension $1\leq i\leq n-1$. We use the term subspace referring to linear subspaces. For every subset $A$ of $\mathbb{R}^n$ and subspace $H$ we writethe projection of $A$ onto $H$ as $P_H A$. The reflection with respect to a subspace $H$, given by the map 
\begin{equation}\label{e1}
	x \mapsto x-2P_{H^\perp}\{x\},
\end{equation}
is denoted by $R_H$, where $H^\perp$ is the subspace orthogonal to $H$. When a set $A$ of $\mathbb{R}^n$ is such that $A=R_H A$, it is said to be symmetric with respect to $H$, or $H$-symmetric for short. 
 
A fundamental result in the theory of convex bodies is the Brunn-Minkowski inequality. It states that for every compact sets $K,L$ the inequality 
\begin{equation}\label{e3}
	\lambda_n(K+L)^{1/n}\geq \lambda_n(K)^{1/n}+\lambda_n(L)^{1/n}
\end{equation} 
holds, where equality is achieved if and only if $K$ and $L$ are homothetic convex bodies or lower-dimensional convex sets liyng in two parallel affine subspaces. For a complete survey regarding this inequality and its extensions see Gardner \cite{GARDBM}.

We now go back to the concept of $H$-symmetrization introduced in \cite{BIANCHI201751}, \cite{bianchi2019convergence}. Given a family of sets $\mathcal{E}$ and a subspace $H$, an $H$-symmetrization is a map \[\diamondsuit_H:\mathcal{E}\to \mathcal{E}_H \] with $\mathcal{E}_H=\{C \in \mathcal{E} | R_H E=E\}$, where $R_H$ is the map given by \eqref{e1}.\\ Let $\sym$ be a $H$-symmetrization in $\mathcal{E}$. Of particular interest are the following properties:
\begin{enumerate}
	\item (Monotonicity): For every $K,L \in \mathcal{E}$ if $K \subseteq L$, then $\diamondsuit_H K \subseteq \diamondsuit_H L$,
	\item (Idempotence): For every $K \in \mathcal{E}$ holds $\diamondsuit_H K=\diamondsuit_H \diamondsuit_H K$,
	\item ($H^\perp$-translation invariance for $H$-symmetric sets): If $K \in \mathcal{E}$ and $R_H K=K$, then for every $x \in H^\perp$ holds $\diamondsuit_H(K+x)=K$,
	\item (Invariance for $H$-symmetric sets): If $K \in \mathcal{E}$ and $R_H K=K$, $ \Rightarrow\diamondsuit_H K=K$.
	\item ($F$-invariance): There exist a function $F:\mathcal{E}\to \mathbb{R}$ such that $F(K)=F(\diamondsuit_H K)$ for every $K \in \mathcal{E}$.
\end{enumerate}

When we refer to $\sym$ as a symmetrization it is understood that $\sym_H$ is a $H$-symmetrization for every subspace $H$ of $\mathbb{R}^n$. Of particular interest is the following family of symmetrizations:
\[\mathcal{F}:=\{\sym \text{ symmetrization}|\text{ properties 1,3,4 hold }\}.\]
The results obtained in this paper concern mainly Schwarz, Minkowski e fiber Symmetrizations, which we proceed now to present. 

\paragraph{Schwarz Symmetrization} Let $C \in \mathcal{C}^n$, for a fixed $H \in \mathcal{G}(n,i), 1\leq i \leq n-1$. The Schwarz symmetrization of $C$ is the set \[S_H C:=\bigcup_{x \in H}B(x,r_x),\] where $r_x$ is such that $\lambda_{n-i}(K\cap(H^\perp+x))=\lambda_{n-i}(B(x,r_x))$ if $\lambda_{n-i}(K\cap(H^\perp+x))>0$. If the measure of the section a t $x \in H$ is zero but the section is non empty, we replace it with $x$, otherwise we replace such section with the empty set.
From Fubini's Theorem it follows that this symmetrization preserves the volume, thus satisfying property (5) for $F(\cdot)=\lambda_n(\cdot)$. When $i=n-1$ this is better know as \textit{Steiner Symmetrization}, and in general it decreases intrinsic volumes (see \cite{HADW57}, Satz XI, p. 260, or Theorem 10.4.1 in \cite{schneider_2013}). Both in $\mathcal{C}^n$ and $\mathcal{K}^n$ Schwarz symmetrization satisfies properties (1,2,5), while (3,4) hold only for convex sets in the case $i=n-1$.

\paragraph{Minkowski Symmetrization} Let $C \in \mathcal{C}^n$, $H \in \mathcal{G}(n,i), 1\leq i \leq n$. The Minkowski symmetrization of $C$ is the set \[M_H K:=\frac{C+R_H C}{2}.\] Clearly from the definition of $\mathcal{W}(K)$ and \eqref{e2} it preserves the mean width when $C$ is convex, thus property (5) holds for $F(\cdot)=\mathcal{W}(\cdot)$. From the Brunn-Minkowski inequality \eqref{e3} it follows that \[\lambda_n(M_H K)\geq \lambda_n(K).\] 
It may be useful to consider the central Minkowski symmetrization, i.e. \[M_o K=\frac{K-K}{2},\] which is centrally symmetric. If $K$ lies in an affine subspace $H+x, x \in H^\perp$, then we write $M_x K$ for the central Minkowski symmetrization in such affine subspace, i.e. \[M_x K= \frac{K+R_{H^\perp} K}{2}. \]
In $\mathcal{K}^n$ Minkowski symmetrization satisfies all the listed properties, but in $\mathcal{C}^n$ only (1) holds. 

\paragraph{Fiber Symmetrization} Let $C \in \mathcal{C}^n$, $H \in \mathcal{G}(n,i)1\leq i \leq n$. Then its fiber symmetrization is the set \[F_H K=\bigcup_{x \in H} M_x (K\cap(H^\perp +x)).\] This symmetrization can be seen as an hybrid between Schwarz and Minkowski symmetrization, and the underlying operation was introduced by McMullen \cite{MCMU99}.
As Minkowski symmetrization it increases the volume and in $\mathcal{K}^n$ it satisfies property (1,2,3,4), while (2,3,4) fail to hold in $\mathcal{C}^n$. See \cite{ulivelli2021generalization} for some specific examples.

Fiber and Minkowski symmetrizations can be considered the \textit{extremals} of the family $\mathcal{F}$, in a sense that Theorem \ref{T2} will make clear. Moreover other symmetrizations, different from fiber or Minkowski, can be found in $\mathcal{F}$; see for example the remarks after Corollary 7.3 in \cite{BIANCHI201751}, where the following fact is proved.
\begin{theorem}[Bianchi, Gardner and Gronchi]\label{T2}
	Let $H \in \mathcal{G}(n,i), 1 \leq i \leq n-1$, $\mathcal{E}=\mathcal{K}^n$ or $\mathcal{K}_n^n$. If $\diamondsuit \in \mathcal{F}$, then
	\begin{equation}\label{e4}
		F_H K \subseteq \diamondsuit_H K \subseteq M_H K
	\end{equation}
	for every $K \in \mathcal{E}$.
\end{theorem}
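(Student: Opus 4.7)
The plan is to establish the two inclusions separately via the same template: produce an $H$-symmetric auxiliary set $L$ which, after a translation by some $t \in H^\perp$, is sandwiched with $K$, then invoke monotonicity (property (1)) together with $H^\perp$-translation invariance for $H$-symmetric sets (property (3)), so that $\diamondsuit_H(L+t) = L$ and one of the inclusions follows by monotonicity alone. Since (1) and (3) are essentially the only levers available to an arbitrary $\diamondsuit \in \mathcal{F}$, this is the natural mechanism.

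For the lower inclusion $F_H K \subseteq \diamondsuit_H K$, fix $y \in F_H K$. By definition there exist $x \in H$ and $a, b \in K \cap (H^\perp + x)$ with $y = x + \tfrac{1}{2}(a - b)$. Set $v := \tfrac{1}{2}(a - b) \in H^\perp$, $L := [x - v,\, x + v]$, and $t := \tfrac{1}{2}(a + b) - x \in H^\perp$. Since $x \in H$ and $v \in H^\perp$, the segment $L$ is $H$-symmetric; a direct computation gives $L + t = [b, a]$, which lies in $K$ by convexity, and $y \in L$. Monotonicity applied to $L + t \subseteq K$ together with property (3) yields
\[
\diamondsuit_H K \;\supseteq\; \diamondsuit_H(L + t) \;=\; L \;\ni\; y.
\]
In $\mathcal{K}^n$ the segment $L$ is an admissible test set; for $\mathcal{K}^n_n$ I would replace $L$ by the $H$-symmetric body $L + \varepsilon B^n$ and choose $\varepsilon$ small enough that $L + t + \varepsilon B^n \subseteq K$ whenever $[b,a]$ is interior to $K$, using density of such fibers together with the closedness of $\diamondsuit_H K$ to handle boundary cases.

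For the upper inclusion $\diamondsuit_H K \subseteq M_H K$, introduce for each $s \in H^\perp$ the $H$-symmetric envelope $C_s := \operatorname{conv}\bigl((K + s) \cup R_H(K + s)\bigr)$. Each $C_s$ is $H$-symmetric and satisfies $K \subseteq C_s - s$, so by monotonicity and (3) one gets $\diamondsuit_H K \subseteq \diamondsuit_H(C_s - s) = C_s$ for every $s$, hence $\diamondsuit_H K \subseteq \bigcap_{s \in H^\perp} C_s$. The remaining, and in my view main, obstacle is to identify this intersection as $M_H K$. The inclusion $M_H K \subseteq C_s$ is easy: any $\tfrac{1}{2}(k + R_H k')$ is the midpoint of $k + s \in K + s$ and $R_H(k' + s) = R_H k' - s \in R_H(K + s)$. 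For the reverse I would compute
\[
h_{C_s}(u) \;=\; \max\!\bigl(h_K(u) + u \cdot s,\; h_K(R_H u) - u \cdot s\bigr),
\]
and minimize over $s \in H^\perp$: when $u \in H$ the two terms coincide and the value is $h_K(u) = h_{M_H K}(u)$, while for $u \notin H$ the choice $u \cdot s = \tfrac{1}{2}(h_K(R_H u) - h_K(u))$ is admissible and gives infimum $\tfrac{1}{2}(h_K(u) + h_K(R_H u)) = h_{M_H K}(u)$. The chain $h_{M_H K} \le h_{\bigcap_s C_s} \le \inf_s h_{C_s} = h_{M_H K}$ then forces equality of support functions, whence $\bigcap_{s \in H^\perp} C_s = M_H K$, closing the argument.
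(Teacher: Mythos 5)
Your argument is correct, and it is worth noting that the paper itself does not prove this statement: it is imported verbatim from the remarks after Corollary~7.3 in the cited work of Bianchi, Gardner and Gronchi, so there is no in-text proof to compare against. Your reconstruction uses exactly the two levers that characterize $\mathcal{F}$ in the way the original argument does — $H$-symmetric segments translated into fibers of $K$ for the lower bound $F_H K \subseteq \diamondsuit_H K$, and translated $H$-symmetric envelopes whose intersection is identified with $M_H K$ via support functions for the upper bound — and both computations check out (in particular $L+t=[b,a]$, $R_H s=-s$ for $s\in H^\perp$, and $\inf_{s}h_{C_s}(u)=\tfrac12\bigl(h_K(u)+h_K(R_H u)\bigr)$ including the degenerate case $u\in H$). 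The only step left as a sketch is the $\mathcal{K}^n_n$ case of the lower inclusion, where the segment is not an admissible test set; your $\varepsilon$-fattening plus the density of fibers $[b,a]\subset\operatorname{int}K$ (obtained by shrinking $a,b$ toward an interior point of $K$ along a common fiber) and the closedness of $\diamondsuit_H K$ does close it, but that approximation should be written out rather than asserted.
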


In the last section of this paper we study some counterexamples to the convergence of symmetrization processes. We will present one in Example \ref{ex1}, which was independently proved in \cite{BIANCHI2011869} and \cite{BURCHARD2013550}. Such example shows that a dense sequence of directions, if accurately chosen, leads to a non converging symmetrization process. We show in particular that the same construction holds for the whole family of symmetrizations considered in Theorem \ref{T2}. 

For this family of non converging sequences, when dealing with Steiner symmetrization of compact sets, convergence is still possible in a weaker sense, called \textit{convergence in shape}. The next section will be devoted to the study and the generalization of such convergence (see Definition \ref{exa1}). The first result in this direction was achieved in \cite{ConvShape}, Theorem 2.2, with the following result.

\begin{theorem}[Bianchi, Burchard, Gronchi and Volcic]\label{T3}
	Let $(u_m)$ be a sequence in $\mathbb{S}^{n-1}$ such that $u_m\cdot u_{m-1}=\cos \alpha_m$, where $(\alpha_m)$ is a sequence that satisfies $\sum_{m \in \mathbb{N}} \alpha_m^2 < +\infty$. Define moreover $(H_m)$ as the corresponding sequence of orthogonal hyperplanes given by $H_m=u_m^{\perp}$ for every $m \in \mathbb{N}$.
	
	Then there exist a sequence of rotations $R_m$ such that for every non empty compact set $K \subset \mathbb{R}^n$ the sequence \[K_m:=R_m S_{H_m}\dots S_{H_1} K\] converges in Hausdorff distance to a convex compact set L.
\end{theorem}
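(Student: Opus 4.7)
The plan is to exploit the equivariance of Steiner symmetrization under rotations in order to convert the varying-direction process into symmetrizations along a single fixed hyperplane composed with small ``correction'' rotations, and then show that the corrections accumulate quadratically. I would define the rotations $R_m \in SO(n)$ recursively by $R_1 = I$ and, for $m \geq 2$, by $R_m = Q_m R_{m-1}$, where $Q_m$ is the minimal rotation taking $R_{m-1} H_m$ back to $H_1$. Since $R_{m-1} H_{m-1} = H_1$ by induction and $H_m, H_{m-1}$ are at angle $\alpha_m$, the rotation $Q_m$ has angle exactly $\alpha_m$. Using the equivariance $R \circ S_H = S_{RH} \circ R$, which is immediate from the fibre-wise definition of $S_H$, the recursion becomes
\[
K_m = R_m S_{H_m} R_{m-1}^{-1} K_{m-1} = S_{R_m H_m} R_m R_{m-1}^{-1} K_{m-1} = S_{H_1} Q_m K_{m-1},
\]
and every $K_m$ with $m \geq 1$ is $H_1$-symmetric by construction, being in the image of $S_{H_1}$.

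The core of the proof is a quadratic perturbation estimate: for every $H_1$-symmetric compact set $L$ contained in a fixed ball $B$ and every rotation $Q$ by small angle $\alpha$,
\[
d_\haus(S_{H_1} Q L,\, L) \leq C(B)\, \alpha^2 .
\]
The second-order gain comes from the fact that $H_1$-symmetry forces cancellation at first order: for $x \in H_1$, the chord length $\ell(x) = \lambda_1(L \cap (H_1^\perp + x))$ satisfies $\ell(R_{H_1} x) = \ell(x)$, so the chord lengths of $QL$ regarded as functions of the rotation parameter $\alpha$ have vanishing derivative at $\alpha = 0$. Hence both the lengths and the positions of the Steiner fibres change by $O(\alpha^2)$, and this propagates to the Hausdorff distance between $S_{H_1} Q L$ and $L$. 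I would prove this first for smooth convex bodies via Taylor expansion of the support function $h_{QL}(u) = h_L(Q^{-1} u)$, and then extend to arbitrary compact $L$ by approximation, monotonicity, and the $1$-Lipschitz continuity of $S_{H_1}$ with respect to $d_\haus$.

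Combining the two ingredients gives $d_\haus(K_m, K_{m-1}) \leq C \alpha_m^2$, and the hypothesis $\sum \alpha_m^2 < +\infty$ then implies that $(K_m)$ is Cauchy in the complete metric space $(\mathcal{C}^n, d_\haus)$; hence $K_m \to L$ for some compact $L$. For convex input $K$ every $K_m$ is convex, so $L$ is convex as a Hausdorff limit of convex sets; for arbitrary compact $K$, the convexity of $L$ has to be extracted from the joint structure of the iterated process, either by sandwiching $K$ between its convex hull and suitable interior approximations and using monotonicity of $S_{H_1}$, or by showing directly that the limit inherits convex fibres from Steiner-symmetry in direction $u_1$ together with concavity of its length profile produced by the accumulated small rotations.

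The hard part is the quadratic estimate: the naive bound coming from the $1$-Lipschitz continuity of Steiner symmetrization only gives $d_\haus(S_{H_1} Q L,\, L) = O(\alpha)$, which is insufficient since $\sum \alpha_m$ need not converge. Extracting the second-order gain genuinely relies on the $H_1$-symmetry of $L$ and on uniform quantitative control of its chord-length function, and carrying this bound intact from the smooth convex case to general compact sets (rather than only a qualitative smallness) is the main technical obstacle; a secondary subtlety is the final identification of the limit as convex when $K$ is merely compact.
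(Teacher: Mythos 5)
This theorem is not proved in the paper at all --- it is quoted from \cite{ConvShape}, Theorem 2.2 --- so your proposal has to stand on its own, and it does not. Your first step is fine: the equivariance $R\circ S_H=S_{RH}\circ R$ and the recursive choice of $R_m$ correctly reduce the process to $K_m=S_{H_1}Q_mK_{m-1}$ with $Q_m$ a rotation by angle $\alpha_m$. The argument then collapses at the central lemma: the quadratic estimate $d_\haus(S_{H_1}QL,L)\le C(B)\,\alpha^2$ is false, even for full-dimensional convex bodies. Take $n=2$, $H_1$ the first coordinate axis, and $L_\varepsilon=[-\varepsilon,\varepsilon]\times[-1,1]$, which is $H_1$-symmetric, equal to its own Steiner symmetral, and contained in $B(0,2)$. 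Every vertical line meets $Q_\alpha L_\varepsilon$ in a segment of length at most $2\varepsilon/\sin\alpha$, so $S_{H_1}Q_\alpha L_\varepsilon$ lies in the strip $\{\abs{y}\le\varepsilon/\sin\alpha\}$, while $L_\varepsilon$ contains the point $(0,1)$; choosing $\varepsilon=\alpha^2$ gives $d_\haus(S_{H_1}Q_\alpha L_\varepsilon,L_\varepsilon)\ge 1-2\alpha$, which no bound of the form $C(B)\alpha^2$ can accommodate. The heuristic behind the claimed first-order cancellation is also empty: $R_{H_1}$ fixes $H_1$ pointwise, so the identity $\ell(R_{H_1}x)=\ell(x)$ carries no information, and $H_1$-symmetry (centered fibres) does not prevent a set that is thin in directions inside $H_1$ from losing essentially all of its height at the next step. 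Running the same example through the iteration (start from $K=\{0\}\times[-1,1]$) gives $d_\haus(K_2,K_1)$ of order $1$, so the increments are not summable and the Cauchy argument cannot close.

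Two further ingredients you invoke are also unavailable. Steiner symmetrization is not $1$-Lipschitz --- indeed not even continuous --- for $d_\haus$ on $\mathcal{C}^n$ or on $\mathcal{K}^n$, precisely because of the collapse phenomenon above, so the proposed extension from smooth convex bodies to general compact sets by approximation fails; and the convexity of the limit for merely compact $K$, which you leave essentially open, is a genuine part of the statement. The actual mechanism in \cite{ConvShape} is multiplicative rather than additive: one shows that $K_m$ contains a suitably aligned copy of $K_{m-1}$ contracted by a factor $\cos\alpha_m$, so that after rescaling by the tail products the sequence becomes monotone increasing and bounded, hence convergent. The hypothesis $\sum_m\alpha_m^2<+\infty$ enters exactly as the condition $\prod_m\cos\alpha_m>0$; compare Example \ref{ex1} of this paper, where the same quantity $\gamma=\prod_m\cos\alpha_m$ governs the surviving core. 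If you want to keep your framework, you must replace the false pointwise $O(\alpha^2)$ estimate by such an inclusion and treat the convexity of the limit as a separate step.
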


In Corollary \ref{C1} we obtain a generalization of such result in $\mathcal{K}^n$ for all the symmetrizations in $\mathcal{F}$ with respect to sequences of hyperplanes.

\section{Shape-Stable Symmetrization Processes}

We start noticing some monotonicity properties for volume and mean width with respect to symmetrizations in $\mathcal{F}$.

\begin{lemma} \label{L1}
	Consider $H \in \mathcal{G}(n,i), 1 \leq i \leq n-1,$ and a symmetrization $\diamondsuit_H: \mathcal{K}^n \to (\mathcal{K}^n)_H$ such that $\diamondsuit\in \mathcal{F}$. Then for every $K \in \mathcal{K}^n$ hold \[\lambda_n(\diamondsuit_H K) \geq \lambda_n(K),\quad  \mathcal{W}(K)\geq \mathcal{W}(\diamondsuit_H K).\]
\end{lemma}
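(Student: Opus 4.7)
The plan is to invoke the sandwich inclusion provided by Theorem \ref{T2}, namely
\[ F_H K \subseteq \diamondsuit_H K \subseteq M_H K \]
for every $K \in \mathcal{K}^n$, and then read off both inequalities from the known extremal behavior of the fiber and Minkowski symmetrizations together with monotonicity of $\lambda_n$ and $\mathcal{W}$ with respect to set inclusion.

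For the volume bound, I would use the left inclusion: by monotonicity of $\lambda_n$ under set inclusion,
\[ \lambda_n(\diamondsuit_H K) \geq \lambda_n(F_H K). \]
Fiber symmetrization is known to increase volume (this is the remark recorded right after its definition, and ultimately a consequence of Brunn--Minkowski applied fiber-by-fiber: on each section $K\cap(H^\perp+x)$ the central Minkowski symmetrization $M_x$ does not decrease the $(n-i)$-dimensional volume, and Fubini then integrates this inequality over $H$). So $\lambda_n(F_H K) \geq \lambda_n(K)$, chaining the two estimates.

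For the mean-width bound, I would use the right inclusion and the fact that $\mathcal{W}$ is monotone under set inclusion (since $K \subseteq L$ forces $h_K \leq h_L$ pointwise on $\mathbb{S}^{n-1}$, and $\mathcal{W}$ is the spherical mean of $h_K(\nu)+h_K(-\nu)$). Hence
\[ \mathcal{W}(\diamondsuit_H K) \leq \mathcal{W}(M_H K). \]
The equality $\mathcal{W}(M_H K) = \mathcal{W}(K)$ was already noted after the definition of Minkowski symmetrization (it follows immediately from the additivity of the support function \eqref{e2}, linearity of integration, and the invariance of the spherical integral under the reflection $\nu\mapsto R_H\nu$). Combining these two relations gives $\mathcal{W}(K)\geq \mathcal{W}(\diamondsuit_H K)$.

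There is no serious obstacle: both inequalities reduce to a one-line application of Theorem \ref{T2} together with the two quoted facts (fiber symmetrization is volume-nondecreasing, Minkowski symmetrization is mean-width-preserving) and monotonicity of $\lambda_n$ and $\mathcal{W}$ under inclusion. The only point one should be careful about is that the monotonicity of the two functionals used in this argument is monotonicity under set inclusion, not under the symmetrization, so it does not require any further hypothesis on $\diamondsuit$ beyond $\diamondsuit\in\mathcal{F}$ (which is what makes Theorem \ref{T2} available).
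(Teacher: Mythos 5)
Your proposal is correct and follows essentially the same route as the paper: both parts rest on the inclusion chain of Theorem \ref{T2}, with the volume inequality obtained from the fiber-by-fiber Brunn--Minkowski argument for $F_H$ and the mean-width inequality from monotonicity of support functions together with the mean-width invariance of $M_H$. No gaps.
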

\begin{proof}
The first inequality is a consequence of the Brunn-Minkowski inequality \eqref{e3}. Indeed by definition, if $K \in \mathcal{K}^n$ then every $H$-orthogonal section of $F_H K$ is a Minkowski symmetrization of a $H$-orthogonal section of $K$, thus $\lambda_n(F_H K) \geq \lambda_n(K)$. \\
Now, thanks to the inclusions chain \eqref{e4} we have \[\lambda_n(\diamondsuit_H K)\geq \lambda_n(F_H K) \geq \lambda_n(K).\]
	
	For the second inequality, again thanks to \eqref{e4} we have that $\diamondsuit_H K\subseteq M_H K$, thus clearly $h_{\diamondsuit_H K}(u)\leq h_{M_H K}(u)$ for every $u \in \mathbb{S}^{n-1}$ and consequently \[\mathcal{W}(\diamondsuit_H K)\leq \mathcal{W}(M_H K)=\mathcal{W}(K), \] completing the proof.
\end{proof}

We need now a weaker concept of convergence, which generalizes the phenomenon studied in Theorem \ref{T3}.

\begin{definition}[Convergence in Shape]\label{exa1}
	Given a set $C \in \mathcal{E}$, a symmetrization $\sym$ on $\mathcal{E}$ and a sequence of subspaces $(H_m)$, the sequence of symmetrals \[\sym_{H_m}\dots \sym_{H_1} C\] is said to \textit{converge in shape} if there exist a sequence of rotations $(\rota_m)$ such that 
	\begin{equation}\label{e5}
		\rota_m \sym_{H_m}\dots \sym_{H_1} C
	\end{equation}
	converges.
\end{definition}

\begin{definition}[Shape-stable Sequences]
If in Definition \ref{exa1} for every $k \in \mathbb{N}, m \geq k$ the sequence \[\rota_m \sym_{H_m}\dots \sym_{H_k} C\] converges, where $(\rota_m)$ is independent from $C$ and $k$, the sequence of subspaces $(H_m)$ is \textit{shape-stable} in $\mathcal{E}$ for $\sym$. 
\end{definition}

Examples of a shape-stable sequences are presented in Theorem \ref{T3}. Notice that if $\rota_m$ is the identity for every $m \in \mathbb{N}$, then a shape-stable sequence $(H_m)$ is stable.
To better understand these new concepts, let us show some examples.

\begin{example}[Klain's Theorem]\label{exklain}
In \cite{KLAIN2012340} Klain proved that, considered a finite family of hyperplanes $\mathcal{D}=\{G_1,\dots,G_k \}$, a sequence $(H_m)$ such that $H_m \in \mathcal{D}$ for every $m \in \mathbb{N}$ is stable for Steiner symmetrization on $\mathcal{K}^n$. This result is extended in \cite{bianchi2019convergence} to Minkowski, Fiber, Schwarz and more generic symmetrizations, again in the family $\mathcal{K}^n$. In \cite{ulivelli2021generalization} we proved that such result holds as well for the Minkowski symmetrization on compact sets of $\mathbb{R}^n$.
\end{example}

\begin{example}[Stable Sequence]\label{exa2}
	Consider in $\mathbb{R}^2$ the square $Q$ with vertices $(1,0)$,  $(0,-1)$, $(-1,0)$, $(0,1)$ as in Figure \ref{dr3}. Consider the sequence of lines $(H_m)$ where $H_m=span\{(1,0)\}$ when $m$ is even, $H_m=span\{(0,1)\}$ when $m$ is odd, for $m\geq 2$, while $H_1=span\{(\sqrt{2}+1, 1)\}$. Then $(H_m)$ is stable in $\mathcal{K}^2$ for Minkowski symmetrization thanks to Theorem 5.7 \cite{bianchi2019convergence} (see example \ref{exklain}). Now, observe that $M_{H_1} Q$ is the red octagon in the figure, and all the other symmetrizations leave this body unchanged, so that the limit is be exactly $M_{H_1} Q$. If we start from $m\geq 2$ instead the limit is always $Q$.
\end{example}

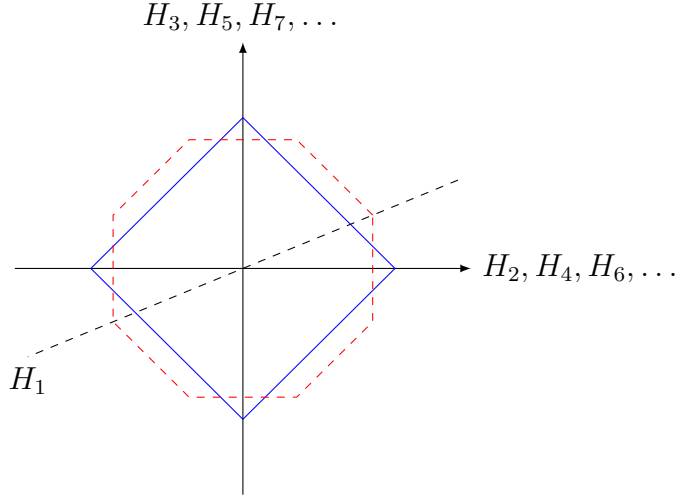
\begin{figure}
	\centering
	\begin{tikzpicture}[scale=1]
    		\draw[-latex] (-3,0) -- (3,0) node[right]{$H_2,H_4,H_6,\dots$};
		\draw[-latex] (0,-3) -- (0,3) node[above]{$H_3,H_5,H_7,\dots$} ;
		
		\draw[blue] (2,0)--(0,2)--(-2,0)--(0,-2)--(2,0); 
		\draw[black, dashed] (2.8284,1.1716)--(-2.8284,-1.1716) node[below]{$H_1$};
		\draw[red,dashed] (1.7071,0.7071)--(0.7071,1.7071)--(-0.7071,1.7071)--(-1.7071,0.7071)--(-1.7071,-0.7071)--(-0.7071,-1.7071)--(0.7071,-1.7071)--(1.7071,-0.7071)--(1.7071,0.7071);
	\end{tikzpicture}
	\caption{Different limits may arise from stable sequences.}
	\label{dr3}
\end{figure}

\begin{example}[Shape-stable Sequence]\label{ex2}
	Consider in $\mathbb{R}^2$ an ellipse $E$ centered in the origin and a sequence of lines as in Theorem \ref{T3}. It is known that Steiner symmetrization preserves ellipses and we can choose a direction $v$ such that the symmetrization with respect to the line $H_1$ parallel to $v$ gives a ball.
			
	If we consider a sequence of lines $(H_m)$ starting from $H_1$ and then continuing as the sequence of Theorem \ref{T3}, we infer that $(H_m)$ is shape-stable in $\mathcal{K}^2$. Moreover, considering that $S_{H_1} E$ is a ball centered in the origin the limit is of course $S_{H_1} E$. If we skip the first symmetrizations, as it was proved in Example 2.1 from \cite{ConvShape} (which we recall here in Example \ref{ex1}) we can choose the remaining directions such that the limit of the convergence in shape is not a ball. 
\end{example}

We can now prove the following equivalence result. Notice that for this result the dimension of the subspaces in the sequence is not relevant.

\begin{theorem}\label{T4}
	Let $\diamondsuit \in \mathcal{F}$. Given a sequence of subspaces $(H_m)$ shape-stable for $\diamondsuit$ in $\mathcal{K}^n$ with isometries $(\rota_m)$, $(H_m)$ is shape-stable in $\mathcal{K}^n$ with isometries $(\rota_m)$ for every symmetrization $\spadesuit \in \mathcal{F}$.
	
	In particular a sequence of subspaces $(H_m)$ is shape-stable for $\diamondsuit \in \mathcal{F}$ if and only if the same property holds for fiber or Minkowski symmetrizations. If $(H_m)$ are hyperplanes, the same conclusion holds for Steiner symmetrization.
\end{theorem}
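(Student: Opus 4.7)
The plan is to couple the $\spadesuit$-process to the reference $\diamondsuit$-process through the sandwich provided by Theorem \ref{T2}. The first step is to iterate that sandwich: I would prove by induction on $m-k$ the inclusion
\[F_{H_m}\cdots F_{H_k}(K)\;\subseteq\;\spadesuit_{H_m}\cdots\spadesuit_{H_k}(K)\;\subseteq\;M_{H_m}\cdots M_{H_k}(K),\]
valid for every $K\in\mathcal{K}^n$, every $\spadesuit\in\mathcal{F}$ and every $m\geq k$. The base case is Theorem \ref{T2}. For the inductive step, applying $F_{H_{m+1}}$ and its monotonicity to the left chain at level $m$, and $M_{H_{m+1}}$ and its monotonicity to the right chain at level $m$, together with the one-step sandwich of Theorem \ref{T2} at the new outer layer, yields the inclusion at level $m+1$. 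Because rotations preserve both inclusions and Hausdorff distances, the sandwich survives the composition with $\rota_m$.

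Second, I would argue precompactness. Denote $A_m^{\spadesuit}=\spadesuit_{H_m}\cdots\spadesuit_{H_k}K$. Since $M_H$ preserves the mean width, the Minkowski iterates $M_{H_m}\cdots M_{H_k}K$ lie in a fixed ball depending only on $K$, so by the sandwich so does $A_m^{\spadesuit}$. Hence $(\rota_m A_m^{\spadesuit})$ is bounded in the Hausdorff metric and Blaschke's selection theorem provides convergent subsequences. Moreover, Lemma \ref{L1} ensures that $\lambda_n(A_m^{\spadesuit})$ is non-decreasing and $\mathcal{W}(A_m^{\spadesuit})$ is non-increasing, so both converge; consequently any Hausdorff subsequential limit of $(\rota_m A_m^{\spadesuit})$ has the same volume and the same mean width.

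The crux of the proof is to upgrade this precompactness to actual convergence. My strategy is a coupling argument: for each fixed $j\geq k$, apply the assumed shape-stability of $\diamondsuit$ with the convex body $A_j^{\spadesuit}$ as initial datum and starting index $j+1$. This produces a convergent sequence $\rota_m \diamondsuit_{H_m}\cdots\diamondsuit_{H_{j+1}}A_j^{\spadesuit}\to L_j$. The iterated sandwich of Step 1, now applied starting from $A_j^{\spadesuit}$, shows that $A_m^{\spadesuit}$ and $\diamondsuit_{H_m}\cdots\diamondsuit_{H_{j+1}}A_j^{\spadesuit}$ both lie between $F_{H_m}\cdots F_{H_{j+1}}A_j^{\spadesuit}$ and $M_{H_m}\cdots M_{H_{j+1}}A_j^{\spadesuit}$, so the Hausdorff distance between them is at most the width of that sandwich. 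I would then bound this width using the convergent monotone invariants of Step 2, so that letting $j\to\infty$ forces all subsequential limits to agree with a common $L$; hence $\rota_m A_m^{\spadesuit}\to L$.

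The \emph{in particular} clause follows by specialization to $\diamondsuit=F_H$ or $M_H$, together with the observation that fiber and Steiner symmetrization coincide on $\mathcal{K}^n$ with respect to hyperplanes. The main obstacle I expect is precisely closing the sandwich width in Step 3: since the fiber and Minkowski iterates of $A_j^{\spadesuit}$ may asymptote to sets of genuinely different volume and mean width, this estimate cannot rely on Lemma \ref{L1} alone and must exploit that $A_j^{\spadesuit}$ itself becomes increasingly stationary along the process, which in turn uses the full strength of the shape-stability of $\diamondsuit$.
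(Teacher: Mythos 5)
Your Steps 1 and 2 are fine (the iterated sandwich and the monotone invariants are exactly the right ingredients), but Step 3, which you correctly identify as the crux, does not close, and the obstacle you flag at the end is fatal rather than technical. The ``width of the sandwich'' $d_\haus\bigl(F_{H_m}\cdots F_{H_{j+1}}A_j^{\spadesuit},\,M_{H_m}\cdots M_{H_{j+1}}A_j^{\spadesuit}\bigr)$ does \emph{not} tend to zero, uniformly or otherwise: the fiber iterates preserve volume while the Minkowski iterates preserve mean width, so (already for a universal sequence of hyperplanes) the two envelopes converge to balls of genuinely different radii, and the sandwich width stays bounded away from zero no matter how large $j$ is. Nothing about $A_j^{\spadesuit}$ ``becoming stationary'' can repair this, because the divergence of the two envelopes is driven by the conserved quantities of $F$ and $M$, not by the initial datum. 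A second, related problem is that you try to couple the $\spadesuit$-process directly to the $\diamondsuit$-process; these two are in general not comparable to each other (each is only sandwiched between $F$ and $M$), so a containment-based comparison between them is not available.

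The paper's proof avoids estimating the sandwich width altogether. It factors the implication through Minkowski symmetrization, the maximal element of $\mathcal{F}$, and in each of the two stages uses a \emph{one-sided} containment together with the near-constancy of a \emph{single} monotone invariant beyond a late time $\nu$. Concretely: to show the $M$-process converges, one assumes two subsequential limits $L_1\neq L_2$, notes $\lambda_n(K_m)\uparrow c$, picks $\nu$ with $c-\lambda_n(K_\nu)<\varepsilon$, and runs the convergent $\diamondsuit$-process from $K_\nu$ onward; its limit $J$ sits inside both $L_1$ and $L_2$ and has volume at least $\lambda_n(K_\nu)>c-\varepsilon$, forcing $\lambda_n(L_1\Delta L_2)<2\varepsilon$. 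Symmetrically, to show the $\spadesuit$-process converges one uses the decreasing mean width, runs the convergent $M$-process from $Z_\nu$ onward to get a limit $V\supseteq \mathrm{conv}(W_1\cup W_2)$ with $\mathcal{W}(V)=\mathcal{W}(Z_\nu)$ close to $b$, contradicting $\mathcal{W}(\mathrm{conv}(W_1\cup W_2))>b$. The point you are missing is that one never needs the two reference processes to be close to each other; one only needs that any subsequential limit of the target process contains (or is contained in) the limit of an auxiliary convergent process started late, whose invariant is within $\varepsilon$ of the target's limiting invariant. (A small additional device in the paper: one replaces $K$ by $K+B(0,r)$ to ensure strictly positive volume in the first stage.) As written, your proposal has a genuine gap at its central step.
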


\begin{proof}[Proof of Theorem \ref{T4}]
	The outline of the proof is the following. We will proceed applying multiple times \eqref{e4}, first proving that if $(H_m)$ is shape stable for $\sym$ in $\mathcal{K}^n$ then it is shape-stable for $M$ in $\mathcal{K}^n$. After that, we show that if the same sequence is shape-stable for $M$  in $\mathcal{K}^n$ then the same holds for $\spadesuit$. 
	
	Let $(H_m)$ be a shape-stable sequence of subspaces for $\sym$ in $\mathcal{K}^n$ as in the hypothesis, we want to prove that for every $K\in \mathcal{K}^n$ the sequence 
\begin{equation}\label{succ}	
K_m=\rota_m M_{H_m}\dots M_{H_1}K
\end{equation}
 converges. Suppose by contradiction that there exists $K \in \mathcal{K}^n$ such that for two subsequences $K_{m_j}, K_{m_l}$ obtained by \eqref{succ} one has $K_{m_j}\to L_1, K_{m_l}\to L_2$ where $L_1\neq L_2$, $L_1,L_2\in \mathcal{K}^n$.
	
Consider the sequence of bodies originated from the same process starting from $K_r:=K+B(0,r)$ instead of $K$, $r>0$ fixed. This is done in order to cover both the full and lower dimensional cases at the same time.\\ Notice that if $H$ is a subspace and $\rota$ is a rotation, for every $K \in \mathcal{K}^n$ \[M_H(K+B(0,r))=M_H(K)+B(0,r), \quad \rota(K+B(0,r))=\rota(K)+B(0,r)\] thus \[(K_r)_m:=\rota_m M_{H_m}\dots M_{H_1}(K+B(0,r))=\rota_m M_{H_m}\dots M_{H_1}K+B(0,r),\] that is $(K_r)_m=K_m+B(0,r)=(K_m)_r$ and instead of $L_1$ and $L_2$ we have the limits $L_1+B(0,r)$ and $L_2+B(0,r)$. Notice that $L_1\neq L_2$ if and only if $L_1+B(0,r)\neq L_2+B(0,r)$ (see for example \cite{schneider_2013}, Lemma 3.1.11).

Since $\lambda_n(K_r)>0$, thanks to Lemma \ref{L1} the sequence of the volumes $\lambda_n(K_m+B(0,r))$ is increasing and strictly positive. Moreover it is bounded, indeed from the compactness of $K$ there exists a ball $B(0,R)$ with $R>0$ such that $K_r \subseteq B(0,R)$. For the monotonicity and symmetry invariance of the Minkowski symmetrization we have that $K_m+B(0,r)\subseteq B(0,R)$ for every $m \in \mathbb{N}$. Then $\lambda_n((K_m)_r)$ converges to a certain value $c_r>0$.

Since $L_1 \neq L_2$, $\lambda_n((L_1)_r\Delta (L_2)_r)=\delta>0$. Fix $0<\varepsilon < \delta/2$, then there exists an index $\nu$ such that $c_r-\lambda_n((K_m)_r)<\varepsilon$ for every $m \geq \nu$. Consider for $m>\nu$ the sequence \[J_m=\rota_m \sym_{H_m}\dots \sym_{H_{\nu+1}}\rota_\nu^{-1}(K_\nu)_r\] \[=\rota_m \diamondsuit_{H_m}\dots \diamondsuit_{H_{\nu+1}} M_{H_\nu}\dots M_{H_1} K_r, \] then thanks to Theorem \ref{T2}, $J_m \subseteq (K_m)_r$ and in particular we have  $J_{m_j}\subseteq (K_{m_j})_r, J_{m_l}\subseteq (K_{m_l})_r$. From the hypothesis the sequence $(H_m)$ is shape-stable in $\mathcal{K}^n$ for $\sym$, thus there exists $J \in \mathcal{K}^n$ such that $J_m\to J$. Clearly the same holds for $(J_{m_j}), (J_{m_l})$. In particular $J\subseteq (L_1)_r$, $J\subseteq (L_2)_r$ and for Lemma \ref{L1} $\lambda_n(J)\geq \lambda_n((K_\nu)_r)$. We infer
	\begin{equation*}
		\begin{gathered}
	\lambda_n((L_1)_r\Delta (L_2)_r)=\lambda_n((L_1)_r\setminus (L_2)_r)+\lambda_n((L_2)_r \setminus (L_1)_r)\leq \\ \lambda_n((L_1)_r\setminus J)+\lambda_n((L_2)_r \setminus J)=2c_r-2\lambda_n(J)\leq 2c_r-2\lambda_n((K_\nu)_r)<2\varepsilon<\delta 
    	\end{gathered}
	\end{equation*}
	which is a contradiction, thus $L_1=L_2$. The same argument can be repeated for every truncated sequence 
	\[\rota_m M_{H_m}\dots M_{H_k} K \]
	 and consequently $(H_m)$ is shape-stable for the Minkowski symmetrization.
	
	Now we prove that if a sequence is shape-stable in $\mathcal{K}^n$ for Minkowski symmetrizations, then it is shape-stable for $\spadesuit \in \mathcal{F}$ as well. Consider for $Z\in \mathcal{K}^n$ the sequence \[Z_m=\rota_m \spadesuit_{H_m} \dots \spadesuit_{H_1} Z.\] Again, by contradiction if $Z_m$ does not converge we can find two different subsequences $Z_{m_j},Z_{m_l}$ converging respectively to $W_1,W_2 \in \mathcal{K}^n$ with $W_1\neq W_2$.
	
	Thanks to Lemma \ref{L1} the sequence $\mathcal{W}(Z_m)$ is non negative and non increasing, thus $\mathcal{W}(Z_m) \to b$ for some $b\geq 0$. Then, if $W_1 \neq W_2$, we have that $\mathcal{W}(\text{conv}(W_1 \cup W_2))>b$. Notice that the cases $W_1 \subset W_2$ and vice versa are automatically excluded since $b=\mathcal{W}(W_1)=\mathcal{W}(W_2)$ and the mean width is strictly monotone. Now, for every $\varepsilon>0$ we can find $\nu \in \mathbb{N}$ such that $\mathcal{W}(Z_m)-b<\varepsilon$ for every $m \geq \nu$. Define for every $m\geq \nu$ the sequence \[V_m=\rota_m M_{H_m}\dots M_{H_{\nu+1}}\rota_\nu^{-1}Z_\nu\] \[=\rota_m M_{H_m}\dots M_{H_{\nu+1}}\spadesuit_{H_\nu}\dots \spadesuit_{H_1}Z \] which converges to some $V \in \mathcal{K}^n$ because $(H_m)$ is shape-stable for Minkowski symmetrization. Minkowski symmetrization preserves the mean width, thus $\mathcal{W}(V)=\mathcal{W}(Z_\nu)$. Moreover, thanks to Theorem \ref{T2} we have that $W_1,W_2 \subseteq V$ and being $V$ convex it holds $\text{conv}(W_1 \cup W_2)\subseteq V$, thus $\mathcal{W}(V)\geq \mathcal{W}(\text{conv}(W_1 \cup W_2))$, then \[\mathcal{W}(\text{conv}(W_1\cup W_2))-b\leq \mathcal{W}(V)-b=\mathcal{W}(Z_\nu)-b<\varepsilon. \] Being $\varepsilon$ arbitrary this inequality contradicts $W(\text{conv}(W_1 \cup W_2))>b$ and $W_1=W_2$. Again the same process can be applied to the truncated sequences, concluding the proof.

\end{proof}

Now Theorem \ref{T1} is just an easy corollary.

\begin{proof}[Proof of Theorem \ref{T1}]
	Observe that if $(H_m)$ is stable then it is shape-stable with $\rota_m$ equal to the identity for every $m$. The proof is then a straightforward application of Theorem \ref{T4}.
\end{proof}

A first consequence is the following extension of Theorem \ref{T0}. Notice that the extension is twofold: The result holds for the whole family $\mathcal{F}$ and the respective family of objects is $\mathcal{K}^n$ instead of $\mathcal{K}^n_n$.
\begin{theorem}\label{cpbis}
Let $\sym,\spadesuit \in \mathcal{F}$. A sequence $(H_m)$ of subspaces is weakly-universal for $\sym$ in $\mathcal{K}^n$ if and only if the same holds for $\spadesuit$.
\end{theorem}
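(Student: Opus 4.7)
The plan is to prove the forward direction (the converse follows by symmetry of the iff). Assume $(H_m)$ is weakly-universal for $\sym$ in $\mathcal{K}^n$; fix $K\in\mathcal{K}^n$ with nonempty interior and $k\in\mathbb{N}$. The hypothesis gives $\sym_{H_m}\cdots\sym_{H_k}K\to B(c_k,r(K,k))$. In particular $(H_m)$ is $\sym$-stable, and since Theorem~\ref{T1} is just Theorem~\ref{T4} applied with trivial isometries $\rota_m=\mathrm{Id}$, we obtain that $(H_m)$ is also $\spadesuit$-stable on $\mathcal{K}^n$. Let $L^\spadesuit_k\in\mathcal{K}^n$ be the limit of $\spadesuit_{H_m}\cdots\spadesuit_{H_k}K$; the task reduces to showing that $L^\spadesuit_k$ is a ball.

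Iterating the sandwich of Theorem~\ref{T2} via the monotonicity of each symmetrization in $\mathcal{F}$ (property~1), I would first establish that for every $m\geq k$,
\[F_{H_m}\cdots F_{H_k}K\,\subseteq\,\spadesuit_{H_m}\cdots\spadesuit_{H_k}K\,\subseteq\,M_{H_m}\cdots M_{H_k}K,\]
and the same relation with $\sym$ in place of $\spadesuit$. Passing to the Hausdorff limit (all exist by stability transfer) yields the double sandwich $L^F_k\subseteq L^\spadesuit_k\subseteq L^M_k$ and $L^F_k\subseteq B(c_k,r(K,k))\subseteq L^M_k$. The whole argument then reduces to showing that this double sandwich collapses, i.e.\ that $L^F_k=L^M_k$; once proved, both coincide with the ball $B(c_k,r(K,k))$, and so does $L^\spadesuit_k$ for every $\spadesuit\in\mathcal{F}$.

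The main obstacle is exactly the collapse $L^F_k=L^M_k$. To prove it I would mimic the scheme of the proof of Theorem~\ref{T4}: by Lemma~\ref{L1}, the volumes along the $F$- and Minkowski-processes are monotone and bounded, Minkowski preserves mean width, and fiber has a Fubini-type volume behavior. If one had $L^F_k\subsetneq L^M_k$, then convexity would force a symmetric difference of positive measure, and comparing this with the convergence $\sym_{H_m}\cdots\sym_{H_k}K\to B(c_k,r(K,k))$ squeezed inside the same sandwich should yield a contradiction with the monotone convergence of $\lambda_n$ and of $\mathcal{W}$ along the two processes, in the same spirit as the volume-and-mean-width contradiction obtained inside the proof of Theorem~\ref{T4}. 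Finally, the extension from $\mathcal{K}^n_n$ to the whole $\mathcal{K}^n$ is handled, exactly as in the proof of Theorem~\ref{T4}, by working with $K+\varepsilon B^n\in\mathcal{K}^n_n$, using that Minkowski addition with a ball interacts well with every symmetrization in $\mathcal{F}$ (so that limits persist under the $\varepsilon$-thickening), and passing $\varepsilon\to 0$.
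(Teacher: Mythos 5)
There is a genuine gap, and it is located exactly where you place ``the main obstacle'': the collapse $L^F_k=L^M_k$ is \emph{false} in general, so the strategy of squeezing $L^\spadesuit_k$ between two coinciding limits cannot work. Take $(H_m)$ a universal sequence of hyperplanes for Steiner symmetrization (so fiber $=$ Steiner here) and $K$ a convex body that is not a ball. Steiner symmetrization preserves volume, so $L^F_k$ is a ball of volume $\lambda_n(K)$; Minkowski symmetrization preserves mean width, so $L^M_k$ is a ball of mean width $\mathcal{W}(K)$. By Urysohn's inequality these two balls have different radii unless $K$ is already a ball, hence $L^F_k\subsetneq L^M_k$ is the \emph{typical} situation, not a contradiction one can rule out. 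Being sandwiched between two concentric balls of different radii gives no information about $L^\spadesuit_k$ being a ball, so the reduction in your second paragraph does not reduce anything. (Your first step --- transferring stability via Theorem~\ref{T1} so that all the limits $L^\spadesuit_k$ exist --- is fine and matches the paper.)

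The paper's proof circumvents this by using \emph{interleaved} processes rather than comparing the two pure processes. To show the Minkowski limit $L$ is a ball, it runs Minkowski symmetrization up to a late stage $\nu$ and then switches to $\sym$: by weak universality of $(H_m)$ for $\sym$ the switched process converges to a ball $B_\nu$, by Theorem~\ref{T2} and monotonicity $B_\nu\subseteq L$, and by Lemma~\ref{L1} $\lambda_n(B_\nu)\geq\lambda_n(K_\nu)\to\lambda_n(L)$, so $L$ is exhausted in measure by inscribed balls and must itself be a ball. The converse direction ($M$ weakly universal $\Rightarrow$ $\spadesuit$ weakly universal) is handled symmetrically, switching from $\spadesuit$ to $M$ at stage $\nu$ and using that the mean widths $\mathcal{W}(B_\nu)=\mathcal{W}(Z_\nu)$ decrease to $\mathcal{W}(W)$ while $W\subseteq B_\nu$. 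If you want to salvage your write-up, replace the ``collapse'' step with this switching argument; the rest of your scaffolding (stability transfer, the $K+B(0,r)$ thickening) can stay.
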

\begin{proof}
The strategy will be the same of Theorem \ref{T4}, with the advantage of using Theorem \ref{T1}.

If $(H_m)$ is weakly-universal for $\sym$ in $\mathcal{K}^n$, then it is stable in $\mathcal{K}^n$. Using Theorem \ref{T1}, this implies that $(H_m)$ is stable for Minkowski symmetrization, thus we only need to prove that for every $K \in \mathcal{K}^n$ the limit $L$ of corresponding symmetrization process \[ K_m=M_{H_m}\dots M_{H_1} K\] is a ball.
Again the sequence $\lambda_n(K_m)$ is bounded and increasing, thus it converges to a certain $c\geq 0$. Through the same argument employed in Theorem \ref{T4} we can suppose $c>0$, i.e. considering $K+B(0,r)$ instead of $K$ for $r>0$ arbitrary small. 

Since $(H_m)$ is weakly-universal for $\sym$, for every $\nu \in \mathbb{N}$ we have that the sequence \[\sym_{H_m}\dots \sym_{H_{\nu+1}} M_{H_\nu}\dots M_{H_1} K \] converges to a ball $B_\nu$ such that $\lambda_n(B_\nu)\geq \lambda_n(K_\nu)$ thanks to Lemma \ref{L1} and $B_\nu \subseteq L$ for every $\nu$ thanks to Theorem \ref{T2}. Since $\lambda_n(K_m) \to c$ increasingly, we have that for every $\varepsilon>0$ exists $\nu \in \mathbb{N}$ such that $\lambda_n(L \Delta B_\nu )<\varepsilon$, thus $L$ is a ball.

Suppose now that $(H_m)$ is weakly universal for Minkowski symmetrization in $\mathcal{K}^n$, then clearly $(H_m)$ is stable for $\spadesuit$. Consider then for $Z \in \mathcal{K}^n$ the limit $W$ of the sequence \[Z_m=\spadesuit_{H_m}\dots \spadesuit_{H_1} Z.\] Again $\mathcal{W}(Z_m)$ is a non negative and non increasing sequence, thus it converges to a value $b\geq0$. 

$(H_m)$ is weakly-universal for Minkowski symmetrization, thus for every $\nu \in \mathbb{N}$ we have a ball $B_\nu$ as the limit of the sequence \[M_{H_m}\dots M_{H_{\nu+1}}\spadesuit_{H_\nu}\dots \spadesuit_{H_1} Z.\] Then $\mathcal{W}(B_\nu)=\mathcal{W}(Z_\nu)$ thanks to the properties of Minkowski symmetrization and for every $\nu$ Theorem \ref{T2} gives $W \subseteq B_\nu$. Since $\mathcal{W}(B_\nu)$ converges decreasingly to $\mathcal{W}(W)$, we have that $W$ must be a ball.
\end{proof}

Theorem \ref{T4} gives us the possibility to extend many known results for Steiner symmetrization to all the $\sym$ satisfying (1,3,4), in particular the Minkowski symmetrization through hyperplanes. For example we immediately have the following generalization of Theorem \ref{T3}.

\begin{corollary}\label{C1}
Let $(H_m)$ be a sequence of hyperplanes and the corresponding normals $(u_m) \subset \mathbb{S}^{n-1}$. Consider moreover $\sym \in \mathcal{F}$ and the angles $(\alpha_m)$ such that $u_m\cdot u_{m-1}=\cos \alpha_m$. If $\sum_{m \in \mathbb{N}} \alpha_m^2 < +\infty$, then there exist rotations $(R_m)$ such that for every non empty convex compact set $K \subset \mathbb{R}^n$ the sequence \[K_m:=R_m \sym_{H_m}\dots \sym_{H_1} K\] converges in Hausdorff distance to a set $L \in \mathcal{K}^n$.
\end{corollary}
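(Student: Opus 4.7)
The plan is to deduce the corollary from Theorem~\ref{T3} via the transfer principle of Theorem~\ref{T4}. Since Steiner symmetrization on $\mathcal{K}^n$ with respect to hyperplanes lies in $\mathcal{F}$, it is enough to verify that $(H_m)$ is shape-stable for Steiner symmetrization on $\mathcal{K}^n$. The conclusion of the corollary is then the $k=1$ instance of the shape-stability that Theorem~\ref{T4} propagates from Steiner to an arbitrary $\sym \in \mathcal{F}$.

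First I would establish shape-stability for Steiner symmetrization. Theorem~\ref{T3} supplies rotations $(R_m)$ with $R_m S_{H_m}\dots S_{H_1} K$ convergent in Hausdorff distance for every $K \in \mathcal{K}^n$, since Steiner preserves convexity and Theorem~\ref{T3} applies to all compact sets. Because the summability condition $\sum \alpha_m^2 < +\infty$ is inherited by every tail $\sum_{m\geq k}\alpha_m^2 < +\infty$, Theorem~\ref{T3} also applies to every truncated sequence $(H_m)_{m\geq k}$. The point that requires care is that a single sequence $(R_m)$ should witness convergence for all $k$ simultaneously, as mandated by the definition of shape-stability. I would extract this from the construction in \cite{ConvShape}: the rotations there are built incrementally from the 2-plane rotations $\omega_m$ taking $u_{m-1}$ to $u_m$, so the rotations associated to a truncation starting at $k$ differ from $(R_m)$ only by a fixed orthogonal map, which gets absorbed into the limit set.

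Once shape-stability of $(H_m)$ for Steiner symmetrization on $\mathcal{K}^n$ is in hand, Theorem~\ref{T4} transfers it to every $\sym \in \mathcal{F}$ with the same isometries $(R_m)$. Specializing to $k=1$ yields Hausdorff convergence of $K_m = R_m \sym_{H_m}\dots \sym_{H_1} K$; the limit $L$ belongs to $\mathcal{K}^n$ since each $\sym_{H_m}$ and each rotation maps $\mathcal{K}^n$ to itself and $\mathcal{K}^n$ is closed under the Hausdorff topology. The main obstacle is the uniform choice of rotations across truncations; once that is secured through the incremental structure of the \cite{ConvShape} construction, the remainder of the argument is a direct invocation of Theorem~\ref{T4}.
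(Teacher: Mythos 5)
Your proof is correct and follows the same route the paper intends: Theorem \ref{T3} supplies shape-stability of $(H_m)$ for Steiner symmetrization on $\mathcal{K}^n$, and Theorem \ref{T4} then transfers it, with the same rotations, to any $\sym\in\mathcal{F}$. Your extra care in checking that a single sequence of rotations works for all truncated processes (which is genuinely needed for Theorem \ref{T4} to apply, since its proof restarts the process at an arbitrary index $\nu+1$) addresses exactly the point the paper leaves implicit when it declares the corollary immediate.
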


For the Minkowski symmetrization with respect to hyperplanes we have a stronger result. First we need the following convergence criterion from \cite{ulivelli2021generalization} (Theorem 1.2).

\begin{theorem}\label{T5}
	Consider $K \in \mathcal{K}^n$ and a sequence of isometries $(\mathbb{A}_m)$. If the sequence \[K_m=\frac{1}{m} \sum_{j=1}^m \mathbb{A}_j K \] converges, then the same happens for every compact set $C \in \mathcal{C}^n$ such that conv$(C)=K$. Moreover, the two sequences converge to the same limit.
\end{theorem}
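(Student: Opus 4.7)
The plan is to compare the sequence $C_m=\frac{1}{m}\sum_{j=1}^m \mathbb{A}_j C$ with the given sequence $K_m$ via a Shapley--Folkman estimate and show that $d_\haus(C_m,K_m)\to 0$, so that the two sequences share the same Hausdorff limit.

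First I would establish the two elementary relations $C_m\subseteq K_m$ and $\operatorname{conv}(C_m)=K_m$. The inclusion is immediate from $C\subseteq K=\operatorname{conv}(C)$ and the monotonicity of Minkowski addition. The equality follows by iterating the classical identity $\operatorname{conv}(A+B)=\operatorname{conv}(A)+\operatorname{conv}(B)$ and noting that isometries preserve the convex hull, so that $\operatorname{conv}(\mathbb{A}_j C)=\mathbb{A}_j\operatorname{conv}(C)=\mathbb{A}_j K$.

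Next I would invoke the Shapley--Folkman lemma applied to the $m$ summands $\tfrac{1}{m}\mathbb{A}_j C$. For any $x\in K_m=\sum_j \tfrac{1}{m}\mathbb{A}_j K$, the lemma gives a decomposition $x=\sum_j x_j$ with $x_j\in\tfrac{1}{m}\mathbb{A}_j K$ and $x_j\in\tfrac{1}{m}\mathbb{A}_j C$ outside a set of at most $n$ exceptional indices. Replacing each exceptional $x_j$ by an arbitrary $y_j\in\tfrac{1}{m}\mathbb{A}_j C$ introduces an error of at most $\operatorname{diam}(C)/m$ per exceptional index, since $\operatorname{diam}(\tfrac{1}{m}\mathbb{A}_j C)=\operatorname{diam}(C)/m$. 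The resulting point $\sum_j y_j\in C_m$ thus satisfies $|x-\sum_j y_j|\le n\operatorname{diam}(C)/m$. Combined with the inclusion $C_m\subseteq K_m$, this yields
$$d_\haus(C_m,K_m)\le \frac{n\operatorname{diam}(C)}{m}\xrightarrow[m\to\infty]{}0.$$

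Finally, the triangle inequality for $d_\haus$ together with the assumption $K_m\to L$ gives $C_m\to L$, establishing both the convergence of $(C_m)$ and the coincidence of the limits. The crucial step is the Shapley--Folkman estimate, which provides the uniform-in-$m$ decomposition bound needed to squeeze $C_m$ against its convex hull $K_m$; everything else is a routine manipulation of Minkowski sums and Hausdorff distance.
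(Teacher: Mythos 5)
Your argument is correct and complete. Note first that the paper itself does not prove Theorem \ref{T5}: it is imported verbatim from \cite{ulivelli2021generalization} (Theorem 1.2), so there is no in-paper proof to compare against. Your route via the Shapley--Folkman lemma is the natural self-contained one for statements of this ``convexification of Minkowski averages'' type: the identity $\operatorname{conv}(C_m)=K_m$ reduces everything to bounding the Hausdorff gap between a Minkowski sum of $m$ sets and its convex hull, and Shapley--Folkman gives exactly the uniform bound $d_\haus(C_m,K_m)\leq n\operatorname{diam}(C)/m$, since at most $n$ summands need to be corrected and each correction costs at most $\operatorname{diam}\bigl(\tfrac{1}{m}\mathbb{A}_jC\bigr)=\operatorname{diam}(C)/m$ (isometries and dilations scale the diameter as claimed, and the diameter of a set equals that of its convex hull). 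Two cosmetic points: you should say explicitly that $C$ is nonempty so that the replacement points $y_j$ exist and $d_\haus$ is defined, and you could mention that the Shapley--Folkman--Starr refinement would improve the constant from $n\operatorname{diam}(C)$ to a $\sqrt{n}$-type bound, though this is irrelevant here since any $O(1/m)$ decay suffices. The final transfer of the limit by the triangle inequality is routine, as you say.
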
  

Together with Theorem \ref{T1} this implies the following.

\begin{corollary}\label{C2}
	If $(H_m)$ is a shape-stable sequence of hyperplanes for Steiner symmetrization on $\mathcal{C}^n$, then it is shape-stable on $\mathcal{C}^n$ for Minkowski symmetrization. In particular Theorem \ref{T3} holds for Minkowski symmetrization as well.
\end{corollary}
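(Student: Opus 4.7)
The plan is to combine the $\mathcal{K}^n$-level equivalence of Theorem \ref{T4} with the $\mathcal{K}^n$-to-$\mathcal{C}^n$ transfer encoded in Theorem \ref{T5}. First I would restrict the assumption to convex bodies: since $\mathcal{K}^n \subset \mathcal{C}^n$, shape-stability of $(H_m)$ for Steiner symmetrization on $\mathcal{C}^n$ with rotations $(\mathbb{A}_m)$ automatically yields shape-stability for Steiner on $\mathcal{K}^n$ with the same rotations. Since Steiner and fiber symmetrization coincide on $\mathcal{K}^n$ when the $H_m$ are hyperplanes, and fiber symmetrization lies in $\mathcal{F}$, Theorem \ref{T4} then gives shape-stability on $\mathcal{K}^n$ for Minkowski symmetrization with the same $(\mathbb{A}_m)$.

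Next I would extend the convex-case conclusion to an arbitrary compact $C$, exploiting the algebraic structure of Minkowski iterates. Unrolling the recursion $M_H A = (A + R_H A)/2$ produces
\[ M_{H_m}\cdots M_{H_1} C \;=\; \frac{1}{2^m}\sum_{\varepsilon \in \{0,1\}^m} R_{H_m}^{\varepsilon_m}\cdots R_{H_1}^{\varepsilon_1} C, \]
so the $m$-th iterate is a normalized Minkowski average of $2^m$ isometric copies of $C$. Because both Minkowski addition and scaling commute with taking convex hulls,
\[ \mathrm{conv}\bigl(M_{H_m}\cdots M_{H_1} C\bigr) \;=\; M_{H_m}\cdots M_{H_1}\,\mathrm{conv}(C), \]
and the right-hand side, pre-multiplied by $\mathbb{A}_m$, converges to a convex limit $L$ by the previous step. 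I would then invoke Theorem \ref{T5}, with the enumeration of isometries coming from the unrolled sum, to pass from convergence of the convex hulls to convergence of the raw iterates, obtaining $\mathbb{A}_m M_{H_m}\cdots M_{H_1} C \to L$. Running the same argument on truncated tails gives shape-stability of $(H_m)$ on $\mathcal{C}^n$ for Minkowski symmetrization, and the ``in particular'' clause follows by feeding Theorem \ref{T3} into this equivalence.

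The main obstacle will be matching the Minkowski unrolled average to the precise Cesàro form $\tfrac{1}{m}\sum_{j=1}^m \mathbb{A}_j K$ required by Theorem \ref{T5}: the Minkowski process produces $2^m$ summands of equal weight $1/2^m$, rather than a sliding Cesàro average of $m$ summands. The substance of Theorem \ref{T5} is essentially a Shapley--Folkman-type estimate bounding the Hausdorff discrepancy between a normalized Minkowski sum of $N$ isometric copies of a compact set and its convex hull by a quantity of order $\mathrm{diam}(C)/N$; this estimate depends only on $N \to \infty$ rather than on any specific Cesàro structure, so either a direct appeal to the proof of Theorem \ref{T5} or a short Shapley--Folkman invocation with $N = 2^m$ should close the gap.
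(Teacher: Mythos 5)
Your proof follows the paper's own argument essentially verbatim: restrict to $\mathcal{K}^n$, invoke Theorem \ref{T4} to transfer shape-stability from Steiner to Minkowski there, then unroll the Minkowski iterates into a normalized mean of $2^m$ isometric copies of $C$ and appeal to Theorem \ref{T5} to pass to general compact sets. You are in fact slightly more careful than the paper, which applies Theorem \ref{T5} without comment despite the same mismatch between its Ces\`aro form ($m$ summands of weight $1/m$) and the Minkowski average ($2^m$ summands of weight $1/2^m$) that you explicitly flag and resolve via the Shapley--Folkman-type estimate underlying that theorem.
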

\begin{proof}
	First observe that being $\mathcal{K}^n$ closed in $\mathcal{C}^n$, then $(H_m)$ is obviously shape-stable for Steiner symmetrization on $\mathcal{K}^n$ and from Theorem \ref{T4} it is shape-stable for Minkowski symmetrization on $\mathcal{K}^n$.
	
	Now, in order to conclude the proof we just have to express the shape-stable sequence as a sequence of means of isometries so that we can apply Theorem \ref{T5}. This is clear if we write explicitly the sequence. Indeed, for every $C \in \mathcal{C}^n$ \[C_1=\rota_1 M_{H_1} C=\rota_1 \left(\frac{C+R_{H_1}C_1}{2} \right)=\frac{\rota_1 C+ \rota_1 R_{H_1} C}{2}.\]
Iterating this process every $C_m$ is a Minkowski mean of $2^m$ isometries of $C$.
	
Theorem \ref{T3} provides shape-stable sequences for Steiner symmetrization in $\mathcal{C}^n$, thus the same sequences are shape-stable in $\mathcal{C}^n$ for Minkowski symmetrization.	
\end{proof}

We conclude this section with the proof of Theorem \ref{T6}.
\begin{proof}[Proof of Theorem \ref{T6}]
	In the hypothesis of the theorem we can apply Corollary \ref{C1}. The rotations $R_m$ in such statement (for the details on their construction see the proof of Theorem 2.2 in \cite{ConvShape}, in this paper Theorem \ref{T3}) correspond to the composition of the $m$ planar rotations $\rota_m$ of $\alpha_m$ degrees as $R_m=\rota_m\dots \rota_1$, where every $\rota_m$ is such that $\rota_m R_{m-1} u_{m}=e_1$ and it fixes the subspace $u_m^\perp\cap e_1^\perp$. 
	
	We now show that $(R_m)$ is a Cauchy sequence on the space of endomorphisms of $\mathbb{R}^n$ with the classical sup norm \[\norm{\rota}:=\sup_{z \in \mathbb{S}^{n-1}}\norm{\rota z},\] thus it converges to a certain rotation $R$. From the properties of the norm and the triangular inequality
	\begin{equation*}
		\begin{gathered}
			\norm{R_{m+k}-R_m}=\norm{\rota_{m+k}\dots \rota_{m+1}R_m-R_m}= \norm{R_m}\norm{\rota_{m+k}\dots \rota_{m+1}-Id}=\\ \norm{\rota_{m+k}\dots \rota_{m+1}-Id}\leq \norm{\rota_{m+k}\dots \rota_{m+1}-\rota_{m+1}}+\norm{\rota_{m+1}-Id}\leq\\ \norm{\rota_{m+1}}\norm{\rota_{m+k}\dots \rota_{m+2}-Id}+2\sin(\abs{\alpha_{m+1}}/2)\leq \dots \leq 2\sum_{j=m+1}^{m+k} \sin(\abs{\alpha_j}/2).
		\end{gathered}
	\end{equation*}
	From the hypothesis the series $(\abs{\alpha_m})$ converges, proving the claim.
	
	From Corollary \ref{C1} the sequence of sets \[R_m\sym_{H_m}\dots \sym_{H_1} K\] converges for every $K \in \mathcal{K}^n$ to a certain set $L$, then it makes sense to consider the set $R^{-1}L$. We then have the following estimates \[d_\haus(\sym_{H_m}\dots \sym_{H_1} K,R^{-1}L)\leq d_\haus(\sym_{H_m}\dots \sym_{H_1} K,R_m^{-1}L)+d_\haus(R_m^{-1}L,R^{-1}L).\]
	
	Thanks to the invariance with respect to isometries of the Hausdorff distance \[d_\haus(R_m\sym_{H_m}\dots \sym_{H_1} K,L)=d_\haus(\sym_{H_m}\dots \sym_{H_1} K,R_m^{-1}L)\] and thus from the convergence of $R_m\sym_{H_m}\dots \sym_{H_1} K$ and $R_m$ we infer that $\sym_{H_m}\dots \sym_{H_1} K$ converges to $R^{-1}L$, concluding the proof.
	
\end{proof}

\section{Counterexamples to convergence}

As we have seen Theorem \ref{T4} lets us extend Theorem \ref{T2} to all the symmetrizations $\sym \in \mathcal{F}$. The latter result arises from the study of a peculiar counterexample which now we briefly show. It can be found in different versions in \cite{BIANCHI2011869},\cite{BURCHARD2013550} and \cite{ConvShape}. In the following examples the vectors $\{e_1,e_2\}$ are intended as the standard orthonormal basis in $\mathbb{R}^2$.

\begin{example}\label{ex1}
	Consider a sequence of angles $(\alpha_m) \subset (0,\pi/2)$ such that 
	\begin{equation} \label{e6}
		\sum_{m \in \mathbb{N}} \alpha_m=+\infty, \qquad \sum_{m \in \mathbb{N}} \alpha_m^2< + \infty,
	\end{equation}
	and take the sequence of directions in $\mathbb{R}^2$ given by $u_m:=(\cos \beta_m, \sin \beta_m)$ where \[ \beta_m:=\sum_{j=1}^m \alpha_j\] with corresponding orthogonal lines $H_m=u_m^\perp$.
	
	Let $0<\gamma:=\prod_{m \in \mathbb{N}} \cos \alpha_m$ (which converges because of the second condition in \eqref{e6}), we consider a compact set $K \subset \mathbb{R}^2$ with area $0<|K|<\pi(\gamma/2)^2$ and containing a vertical unitary segment $\ell$ centered in the origin. We prove now that the sequence \[ K_m:=S_{H_m}\dots S_{H_1} K \] does not converge. Consider indeed the sequence of segments \[\ell_m:=P_{H_m} K_{m-1}, \] where the length of $\ell_m$ converges to $\gamma>0$ and each of them is clearly aligned with $H_m$. The sequence of directions $(u_m)$ is dense in $S^1$, thanks to \eqref{e6}, and it does not converge. The same holds for the perpendicular sequence of lines $(H_m)$. Thus for every $\nu \in \mathbb{S}^1$ we can find a subsequence $H_{m_k}^\perp$ such that its corresponding direction converges to $\nu$. Similarly $\ell_{m_k}$ converges to a segment of length $\gamma>0$ aligned to $\nu$. 
	
	Now, if $K_m$ converges, for the monotonicity of Steiner symmetrization it must contain all these subsequences of diameters, and consequently a ball $B$ of diameter $\gamma$ centered  in the origin. But we supposed $\lambda_2(K)<\pi(\gamma/2)^2$, thus $K_m$ cannot converge.
\end{example}

The peculiarity of the sequence involved in this example is that the corresponding directions are dense in $\mathbb{S}^1$, which could seem a reasonable sufficient condition for convergence to a ball. As it was showed this is definitely not the case, even though in \cite{BIANCHI2011869} it was proved for compact convex sets that a dense sequence of hyperplanes can be reordered to obtain a universal sequence. This was generalized in \cite{VO2016} to generic compact sets. 

In \cite{bianchirota} instead it is proved a characterization of the symmetry that a convex compact body needs in order to be a ball. The form we present includes the statements from Theorem 3.2 for one dimensional subspaces.

\begin{theorem}[Bianchi, Gardner and Gronchi]\label{T7}
	Let $H_j \in \mathcal{G}(n,1), j=1,\dots,n$, be such that \\
	(i) at least two of them form an angle that is an irrational multiple of $\pi$,\\
	(ii) $H_1+\dots+H_n=\mathbb{R}^n$ and\\
	(iii)${H_1,\dots,H_n}$ cannot be partitioned into two mutually orthogonal non empty subsets.
	
	If $E\subseteq \mathbb{S}^{n-1}$ is nonempty, closed and such that $R_{H_j}E=E, j=1,...,n$, then $E=\mathbb{S}^{n-1}$.
	
	Hence, if $K \in \mathcal{K}^{n}_n$ satisfies $R_{H_j} K=K$ for $j=1,\dots,n$, then $K$ is a ball centered in the origin.
\end{theorem}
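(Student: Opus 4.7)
My plan is to introduce the closed subgroup $G:=\overline{\langle R_{H_1},\ldots,R_{H_n}\rangle}\subseteq O(n)$, a compact Lie group, and reduce both statements to showing that $G$ acts transitively on $\mathbb{S}^{n-1}$. Transitivity implies the first assertion immediately: $\{g\in O(n):gE=E\}$ is a closed subgroup containing each $R_{H_j}$ and hence contains $G$, so $E$ is a nonempty closed $G$-invariant subset of $\mathbb{S}^{n-1}$, which forces $E=\mathbb{S}^{n-1}$. For the convex body assertion, the centroid $c$ of $K$ is fixed by each $R_{H_j}$ (centroids commute with linear isometries), so $c\in\bigcap_j H_j$; by (ii) the $H_j$ span $\mathbb{R}^n$, so this intersection of lines is $\{0\}$, hence $c=0$ and $0$ is an interior point of $K$. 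The radial function $\rho_K:\mathbb{S}^{n-1}\to(0,\infty)$ is then continuous and $R_{H_j}$-invariant; each of its level sets is closed and invariant (hence empty or all of $\mathbb{S}^{n-1}$ by the first part), and connectedness of $\mathbb{S}^{n-1}$ forces $\rho_K$ to be constant, so $K$ is a ball centered at the origin.

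I would establish transitivity in three moves. First I would show $G$ acts irreducibly on $\mathbb{R}^n$: a $G$-invariant subspace $V$ is preserved by every $R_{H_j}$, and since the $\pm 1$-eigenspaces of $R_{H_j}$ are $H_j$ and $H_j^\perp$, each line $H_j$ must lie wholly in $V$ or wholly in $V^\perp$. This splits $\{H_1,\ldots,H_n\}$ into two mutually orthogonal subsets; (iii) forces one of them to be empty and (ii) then forces $V\in\{\{0\},\mathbb{R}^n\}$. Second, (i) produces a one-parameter subgroup inside $G$: picking $H_1,H_2$ with angle $\theta\notin\pi\mathbb{Q}$ and setting $P_{12}:=H_1+H_2$, a direct coordinate computation shows that $R_{H_1}R_{H_2}$ acts as rotation by $2\theta$ on $P_{12}$ and as the identity on $P_{12}^\perp$; by density of irrational rotations on the circle, the closure of its iterates equals the whole subgroup $SO(P_{12})$ of rotations in $P_{12}$ fixing $P_{12}^\perp$, which therefore lies in $G$.

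The third and hardest move upgrades \emph{irreducibility plus one circle of rotations} to transitivity on $\mathbb{S}^{n-1}$. Let $\mathfrak{g}\subseteq\mathfrak{so}(n)$ denote the Lie algebra of $G_0$. It contains the rotation generator $A_{P_{12}}$ and, by $\mathrm{Ad}(G)$-invariance, every conjugate $R_{H_j}A_{P_{12}}R_{H_j}^{-1}$, which up to sign is the generator of rotations in the $2$-plane $R_{H_j}(P_{12})$. A direct bracket calculation shows that the rotation generators of two $2$-planes sharing a common line $L$ bracket to the rotation generator of the $2$-plane spanned by their directions orthogonal to $L$. Iterating the $R_{H_j}$-moves and closing under Lie brackets should propagate the collection of $2$-planes whose rotation generators lie in $\mathfrak{g}$: connectedness of the non-orthogonality graph of $\{H_j\}$ (guaranteed by (iii)) together with the spanning condition (ii) should force every $2$-plane of $\mathbb{R}^n$ to be reached. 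Then $\mathfrak{g}=\mathfrak{so}(n)$, so $G\supseteq SO(n)$, which is transitive on $\mathbb{S}^{n-1}$.

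The hard part will be this propagation step, parlaying a single circle of rotations and finitely many discrete reflections into all of $\mathfrak{so}(n)$. Both (ii) and (iii) are essential here: without (iii) one has an orthogonal decomposition $\mathbb{R}^n=V_A\oplus V_B$ preserved by every generator, and easy products of spheres in the two summands furnish proper closed invariant subsets of $\mathbb{S}^{n-1}$ contradicting the conclusion, while without (ii) one cannot reach directions outside $H_1+\cdots+H_n$. Making the bracket propagation quantitative enough to reach every $2$-plane, without leaving a residual invariant subspace on which $\mathfrak{g}$ acts trivially, is where I expect most of the technical work of a full proof to live.
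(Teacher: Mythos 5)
The reduction of both assertions to transitivity of $G:=\overline{\langle R_{H_1},\dots,R_{H_n}\rangle}$ on $\mathbb{S}^{n-1}$, the centroid and radial-function argument for the convex-body statement, the irreducibility argument from (ii)--(iii), and the extraction of the circle $SO(P_{12})\subseteq G$ from (i) are all correct. (For the record, the paper does not prove this theorem; it quotes it from \cite{bianchirota}, so there is no internal proof to compare against.) The problem is that your third move --- the only place where transitivity could actually be established, and the step to which the whole theorem is logically equivalent --- is not carried out, and the mechanism you sketch for it is insufficient as stated. First, the conjugate planes $R_{H_j}(P_{12})$, $R_{H_j}R_{H_k}(P_{12})$, \dots{} need not pairwise share a line: in $\mathbb{R}^n$ with $n\ge 4$ two $2$-planes generically meet only at the origin, so the bracket identity you invoke (valid for generators of two planes with a common line) does not chain; the bracket of two generic plane generators is not a decomposable bivector, and it vanishes outright when the planes are orthogonal. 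Second, and more fundamentally, the data you propose to propagate from --- irreducibility plus one full circle of rotations of a $2$-plane fixing its orthocomplement --- cannot by itself force $\mathfrak{g}=\mathfrak{so}(n)$: the group $U(2)\subset SO(4)$ acts $\mathbb{R}$-irreducibly on $\mathbb{R}^4$ and contains the circle $\mathrm{diag}(e^{i\theta},1)$, which rotates one real $2$-plane and fixes the complementary one pointwise, yet is a proper subgroup. So any successful propagation must use the reflections $R_{H_j}$ in an essential, quantitative way, and this is precisely what is missing.

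Two remarks on how the gap might be closed. Your target $G\supseteq SO(n)$ is stronger than what the theorem needs; transitivity suffices, and by Montgomery--Samelson--Borel the connected closed subgroups of $SO(n)$ transitive on $\mathbb{S}^{n-1}$ are classified. One viable route is therefore: prove transitivity (or irreducibility plus enough structure), then rule out the exceptional transitive groups by showing that none of them is normalized by a reflection in a line $R_{H}=-R_{H^\perp}$ (e.g.\ no conjugate-linear or complex-linear isometry of $\mathbb{C}^{n/2}$ has a real fixed subspace of dimension $n-1$, which excludes $U(n/2)$); the identity component $G_0$ is normal in $G$, so this is a genuine constraint. Alternatively one can induct on $n$ using the non-orthogonality graph of (iii). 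Either way, the argument that remains to be written is the theorem, not a technicality of it.
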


We can use this theorem to find sequences of lines such that the corresponding symmetrization process, if it converges, goes to a ball. Indeed, consider a sequence $(v_m) \subset \mathbb{S}^{n-1}$ with $n$ accumulation points generating a family of lines ${H_1,\dots,H_n}$ as in the statement of Theorem \ref{T7}. Consider indeed a sequence $(K_m)$ of convex bodies such that every $K_m$ is symmetric with respect to $v_m^\perp$. Then, if the sequence converges, the limit must necessarily be a ball. We can use this fact to provide a new kind of counterexample.

\begin{example}
	Consider in $\mathbb{R}^2$ the two directions $w_1=(1,0),w_2=(\cos \alpha, \sin \alpha)$ such that $\alpha>0$ is an irrational multiple of $\pi$. We consider a sequence $(\gamma_m)\subset [0,\alpha]$ such that $(\alpha_m):=(\abs{\gamma_{m+1}-\gamma_m})$ is as in \eqref{e6}. Moreover we want $\alpha$ and $0$ to be accumulation points of $(\gamma_m)$. 

Consider the sequence of lines $H_m=span\{(\cos \gamma_m, \sin \gamma_m)\}$, then the corresponding directions go back and forth between $w_1$ and $w_2$. Observe that $(H_m)$ has two accumulation points aligned with $w_1$ and $w_2$.

Let $K$ be a compact body centered in the origin with a unitary diameter parallel to $w_1$ and consider the sequence of symmetrals \[K_m=S_{H_m}\dots S_{H_1} K.\] As in Example \ref{ex1} we can consider a sequence of segments \[\ell_m=K_m \cap H_m\] such that $\lambda_1(\ell_{m+1})\geq \lambda_1(\ell_m) \cos \alpha_{m+1}$, thus $\lambda_1(\ell_m)$ converges to a certain value $\gamma>0$ and in particular the two limits of the converging subsequences of $(\ell_m)$ respectively aligned with $w_1$ and $w_2$ will have length greater than $\gamma$.

Using Theorem \ref{T7}, if $K_m$ converges then the limit must be a ball. If we choose $\lambda_2(K)< \pi(\gamma/2)^2$, such ball should contain a diameter of length $\gamma$, which is not possible, thus $K_m$ cannot converge.

\end{example}

We conclude proving that Example \ref{ex1} can be generalized for other symmetrizations, again thanks to Theorem \ref{T2}.

\begin{example}
	Consider a set $K \in \mathcal{K}_2^2$ such that it contains a unitary horizontal segment and with mean width $1/2\pi<\mathcal{W}(K)<\gamma$, where $\gamma$ is as in Example \ref{ex1}. In the hypothesis of Theorem \ref{T2}, for $\sym \in \F$ we have that \[S_{U_m}...S_{U_1} K \subseteq \diamondsuit_{U_m}...\diamondsuit_{U_1} K \subseteq M_{U_m}...M_{U_1} K,\] again $U_j:=\text{span}(u_j)$, and we used Steiner symmetrization because it is equivalent to fiber symmetrization relative to a hyperplane, which is our case working on $\mathbb{R}^2$.
	
	In this way we can exploit the first counterexample and the inclusion chain of Theorem \ref{T2} to guarantee that, if a limit exists for $\diamondsuit_{U_m}...\diamondsuit_{U_1} K$ and $M_{U_m}...M_{U_1} K$, proceeding as before it must contain a ball of diameter $\gamma$, therefore this limit must have mean width greater than $\gamma$. In particular this holds for the sequence of Minkowski symmetrals. But Minkowski symmetrization preserves mean width, which we supposed to be less than $\gamma$. This is a contradiction, thus there cannot be a limit.
\end{example}

\section*{Acknowledgements}
The author would like to thank Gabriele Bianchi and Paolo Gronchi for the many discussions and insightful conversations during the preparation of this work.

\bibliographystyle{siam}
\bibliography{biblioconv}

\begin{thebibliography}{10}

\bibitem{ConvShape}
{\sc G.~Bianchi, A.~Burchard, P.~Gronchi, and A.~Vol{\v c}i{\v c}}, {\em
  Convergence in {S}hape of {S}teiner {S}ymmetrizations}, Indiana University
  Mathematics Journal, 61 (2012), pp.~1695--1710.

\bibitem{BIANCHI201751}
{\sc G.~Bianchi, R.~J. Gardner, and P.~Gronchi}, {\em Symmetrization in
  geometry}, Advances in Mathematics, 306 (2017), pp.~51--88.

\bibitem{bianchi2019convergence}
\leavevmode\vrule height 2pt depth -1.6pt width 23pt, {\em Convergence of
  symmetrization processes},  (2019).

\bibitem{bianchirota}
\leavevmode\vrule height 2pt depth -1.6pt width 23pt, {\em Full rotational
  symmetry from reflections or rotational symmetries in finitely many
  subspaces},  (2022).

\bibitem{BIANCHI2011869}
{\sc G.~Bianchi, D.~A. Klain, E.~Lutwak, D.~Yang, and G.~Zhang}, {\em A
  countable set of directions is sufficient for {S}teiner symmetrization},
  Advances in Applied Mathematics, 47 (2011), pp.~869--873.

\bibitem{BURCHARD2013550}
{\sc A.~Burchard and M.~Fortier}, {\em Random polarizations}, Advances in
  Mathematics, 234 (2013), pp.~550--573.

\bibitem{coupier_davydov_2014}
{\sc D.~Coupier and Y.~Davydov}, {\em Random {S}ymmetrizations of {C}onvex
  {B}odies}, Advances in Applied Probability, 46 (2014), pp.~603--621.

\bibitem{GARDBM}
{\sc R.~Gardner}, {\em The {B}runn-{M}inkowski inequality}, Bull. Amer. Math.
  Soc., 39 (2002), pp.~355--405.

\bibitem{gardner_2006}
{\sc R.~J. Gardner}, {\em Geometric Tomography}, Encyclopedia of Mathematics
  and its Applications, Cambridge University Press, 2~ed., 2006.

\bibitem{GRUB07}
{\sc P.~M. Gruber}, {\em Convex and discrete geometry}, Grundlehren der
  mathematischen Wissenschaften, 336, Springer, Berlin ;, 2007.

\bibitem{HADW57}
{\sc H.~Hadwiger}, {\em Vorlesungen {\"U}ber Inhalt, Oberfl{\"a}che und
  Isoperimetrie}, Springer, 1957.

\bibitem{HUWE}
{\sc D.~Hug and W.~Weil}, {\em Lectures on Convex Geometry}, Graduate Texts in
  Mathematics, Springer Cham, 2020.

\bibitem{KLAIN2012340}
{\sc D.~A. Klain}, {\em Steiner symmetrization using a finite set of
  directions}, Advances in Applied Mathematics, 48 (2012), pp.~340--353.

\bibitem{MALE1986}
{\sc P.~M. Levitska}, {\em Random {S}teiner {S}ymmetrizations}, Studia Sci.
  Math. Hungar., 21 (1986), pp.~373,378.

\bibitem{MCMU99}
{\sc P.~McMullen}, {\em New {C}ombinations of {C}onvex {S}ets}, Geometriae
  Dedicata, 78 (1999), pp.~1--19.

\bibitem{MR2262256}
{\sc J.~V. Schaftingen}, {\em Approximation of symmetrizations and symmetry of
  critical points}, Topol. Methods Nonlinear Anal., 28 (2006), pp.~61--85.

\bibitem{schneider_2013}
{\sc R.~Schneider}, {\em Convex Bodies: The Brunn--Minkowski Theory},
  Encyclopedia of Mathematics and its Applications, Cambridge University Press,
  2~ed., 2013.

\bibitem{ulivelli2021generalization}
{\sc J.~Ulivelli}, {\em Generalization of {K}lain's {T}heorem to {M}inkowski
  {S}ymmetrization of compact sets and related topics}, Canadian Mathematical
  Bulletin,  (2021).

\bibitem{VO2013}
{\sc A.~Vol{\v c}i{\v c}}, {\em Random {S}teiner symmetrizations of sets and
  functions}, Calculus of Variations and Partial Differential Equations, 46
  (2013), pp.~555--569.

\bibitem{VO2016}
\leavevmode\vrule height 2pt depth -1.6pt width 23pt, {\em On iterations of
  {S}teiner symmetrizations}, Annali di Matematica Pura ed Applicata, 195
  (2016), pp.~1685--1692.

\end{thebibliography}
\end{document}